\numberwithin{equation}{section}
\numberwithin{figure}{section}
\theoremstyle{plain}
\newtheorem{thm}{\protect\theoremname}[section]
\theoremstyle{remark}
\newtheorem{rem}[thm]{\protect\remarkname}
\theoremstyle{plain}
\newtheorem{lem}[thm]{\protect\lemmaname}
\newenvironment{proof}[1][\protect\proofname]{\par
\normalfont\topsep6\p@\@plus6\p@\relax
\trivlist
\itemindent\parindent
\item[\hskip\labelsep\scshape #1]\ignorespaces
}{%
\endtrivlist\@endpefalse
}
\providecommand{\proofname}{Proof}
\numberwithin{equation}{section}
\date{}
\providecommand{\lemmaname}{Lemma}
\providecommand{\remarkname}{Remark}
\providecommand{\theoremname}{Theorem}
\begin{document}

\title{Boundary determination of the Lamé moduli for the isotropic elasticity
system}

\author{Yi-Hsuan Lin\thanks{{\footnotesize{}{}{}Department of Mathematics, University of Washington,
Seattle, USA}}; Gen Nakamura\thanks{{\footnotesize{}{}{}Hokkaido University, Sapporo 060-0808, Japan}}}
\maketitle
\begin{abstract}
We consider the inverse boundary value problem of determining the
Lamé moduli of an isotropic, static elasticity equations of system
at the boundary from the localized Dirichlet-to-Neumann map. Assuming
appropriate local regularity assumptions as weak as possible on the
Lamé moduli and on the boundary, we give explicit pointwise reconstruction
formulae of the Lamé moduli and their higher order derivatives at
the boundary from the localized Dirichlet-to-Neumann map. 
\end{abstract}
\textbf{Key words}: Inverse boundary value problem, Dirichlet-to-Neumann
map, isotropic elasticity system, boundary determination, Stroh formalism\\
\textbf{Mathematics Subject Classification}: 74B05, 35R30

\section{Introduction and main result}

Let us briefly give our main result before giving its detailed mathematical
description. That is we give explicit pointwise reconstruction formulae
of Lamé moduli and their derivatives at a given point on the boundary
from the measured data called the localized Dirichlet-to-Neumann map
for the inverse boundary value problem associated to an isotropic
elastic equation in a bounded domain. We will refer this kind of inverse
problem by \textit{boundary determination}.

Let $\Omega\subset\mathbb{R}^{3}$ be a bounded domain with boundary
$\partial\Omega$ and $\lambda=\lambda(x),\mu=\mu(x)$ be the Lamé
moduli which satisfy 
\begin{equation}
\mu>0,\mbox{ }3\lambda+2\mu>0\,\,\mbox{ on}\,\,\overline{\Omega}.\label{eq:Strong Convex}
\end{equation}
The regularity of $\partial\Omega$ and Lamé moduli will be specified
later. Consider the boundary value problem 
\begin{equation}
\begin{cases}
(\mathcal{L}u)_{i}:=\sum_{j,k,l=1}^{3}\dfrac{\partial}{\partial x_{j}}(\dot{C}_{ijkl}\dfrac{\partial}{\partial x_{l}}u_{k})=0\mbox{ }(i=1,2,3) & \mbox{ in }\Omega,\\
u=f\in H^{1/2}(\partial\Omega;\mathbb{C}^{3}) & \mbox{ on }\partial\Omega
\end{cases}\label{eq:Elasticity}
\end{equation}
for the displacement vector $u=(u_{1},u_{2},u_{3})$, where 
\begin{equation}
\dot{C}_{ijkl}=\dot{C}_{ijkl}(x)=\lambda\delta_{ij}\delta_{kl}+\mu(\delta_{ik}\delta_{jl}+\delta_{il}\delta_{jk})\mbox{ }(1\leq i,j,k,\ell\leq3)\label{eq:isotrpic elastic tensor}
\end{equation}
are isotropic elastic tensors in terms of the Cartesian coordinates
$x=(x_{1},x_{2},x_{3})$ with Kronecker delta $\delta_{ij}$. It is
easy to see that $\dot{C}_{ijkl}$ defined by (\ref{eq:isotrpic elastic tensor})
satisfies the symmetry given as 
\[
\dot{C}_{ijkl}(x)=\dot{C}_{klij}(x)=\dot{C}_{jikl}(x)
\]
and the strong convexity condition given as 
\[
\sum_{i,j,k,l=1}^{3}\dot{C}_{ijkl}(x)\varepsilon_{ij}\varepsilon_{kl}\geq c_{0}\sum_{i,j=1}^{3}\varepsilon_{ij}^{2}
\]
with some constant $c_{0}>0$ for any $x\in\overline{\Omega}$ and
symmetric matrix $(\varepsilon_{ij})$.

Define the Dirichlet-to-Neumann (DN) map $\Lambda_{\mathcal{C}}:H^{1/2}(\partial\Omega)\to H^{-1/2}(\partial\Omega)$
by 
\[
\left(\Lambda_{\mathcal{C}}f\right)_{i}:=\sum_{j,k,l=1}^{3}\nu_{j}\dot{C}_{ijkl}\dfrac{\partial u_{k}}{\partial x_{l}}|_{\partial\Omega}\,\,\mbox{ for }\,\,i=1,2,3,
\]
where $u$ is the solution of (\ref{eq:Elasticity}) and $\nu=(\nu_{1},\nu_{2},\nu_{3})$
is the unit normal of $\partial\Omega$ directed into the exterior
of $\Omega$. Let $\epsilon(u):=(\epsilon_{ij}(u))$ be the strain
tensor associated to the solution $u$ of (\ref{eq:Elasticity}),
where 
\[
\epsilon_{ij}(u)=\dfrac{1}{2}\left(\dfrac{\partial u_{i}}{\partial x_{j}}+\dfrac{\partial u_{j}}{\partial x_{i}}\right)\mbox{ for }i,j=1,2,3.
\]
It is well-known that for $f,g\in H^{1/2}(\partial\Omega)$, 
\[
\left\langle \Lambda_{\mathcal{C}}f,g\right\rangle _{H^{-1/2}(\partial\Omega)\times H^{1/2}(\partial\Omega)}=\int_{\Omega}\left(\lambda\,\mbox{div}\,u\,(\overline{\mbox{div}\,v})+2\mu\,\epsilon(u):\overline{\epsilon(v)}\right)dy,
\]
where $\left\langle \cdot,\cdot\right\rangle _{H^{-1/2}(\partial\Omega)\times H^{1/2}(\partial\Omega)}$
is the pairing in $H^{-1/2}(\partial\Omega)\times H^{1/2}(\partial\Omega)$,
$v\in H^{1}(\Omega)$ can be taken whichever satisfies $v=g$ on $\partial\Omega$
and the notation ``$:$'' denotes the Frobenius inner product. Let
$x_{0}\in\partial\Omega$ be an arbitrary point, then the DN map $\Lambda_{\mathcal{C}}$
can be localized near $x_{0}$ by restricting the support of $f,g$
in an open neighborhood of $x_{0}$ in $\partial\Omega$.

The precise description of the aim of this paper is to recover $\lambda,\mu$
and their higher-order normal derivatives near a given point $x_{0}\in\partial\Omega$
by knowing the localized $\Lambda_{\mathcal{C}}$ around $x_{0}$,
under the regularity assumptions on $\lambda,\mu$ and $\partial\Omega$
near $x_{0}$ weak as possible. More specifically, we will show that
for any $m\in\mathbb{N}$, the Lamé moduli and their normal derivatives
at $x_{0}\in\partial\Omega$ up to order $m$ can be given explicitly
from the localized DN map $\Lambda_{\mathcal{C}}$ around $x_{0}$.
There are some related results on this boundary determination for
the elasticity system. For the two dimensional isotropic elastic system,
the boundary determination was given by \cite{akamatsu1991identification}
if the Lamé moduli and $\partial\Omega$ are smooth. In \cite{nakamura1999layer}
and \cite{nakamura1995inverse} the authors developed layer stripping
algorithm in which they solved the boundary determination for the
three dimensional isotropic and transversally isotropic elastic systems
also for the case the elasticity tensor and $\partial\Omega$ are
smooth. But it should be remarked here that a result of boundary determination
under regularity assumptions as weak as possible was missing for the
elasticity systems and we aimed to provide such a result for the isotropic
elasticity system in this paper. For practical application, it is
needless to say the importance of such a result. We note that there
are related results for both the isotropic and anisotropic conductivity
equations using arguments similar as in this paper. For that see \cite{brown2001recovering,kang2002boundary,nakamura2001direct,nakamura2001local}
and the references there in.

For any $m\in\mathbb{N}$, $p\in(0,1)$, $C^{m,p}(\Omega)$ denotes
the standard H$\ddot{\text{o}}$lder space. Then the regularity assumptions
on the Lamé moduli $\lambda,\,\mu$ and $\partial\Omega$ are locally
$C^{m+2}$ and $C^{m,p}$ near $x=0$, respectively.

By introducing the boundary normal coordinates which was used in \cite{kang2002boundary,lee1989determining,nakamura2001local}
for the conductivity equation and \cite{nakamura2017reconstruction}
for the elasticity equation, we can flatten $\partial\Omega$. Also,
without loss of generality, we may assume that $x_{0}$ can be the
origin. In terms of the boundary normal coordinates the displacement
vector $u=(u_{1},u_{2},u_{3})$ and isotropic elastic tensor $(\dot{C}_{ijkl})$
will undergo tensorial change which complicates the notations and
description of arguments. Hence, in order not to distract reader's
attention, we first focus on the reconstruction formulae for the Lamé
parameters for the flat boundary case at $0\in\partial\Omega$. We
will illustrate the non flat boundary case in the last section of
this paper. It will be shown there that the difference we will have
for the non flat boundary case is just coming from the change of coordinates
and normal vector.

To begin with assume that $\partial\Omega$ is flat near $0\in\partial\Omega$
and $\Omega$ is locally given as $\{y_{3}>0\}$ in terms of the Cartesian
coordinates $(y_{1},y_{2},y_{3})$. For the local determination of
the Lamé parameters at $0\in\partial\Omega$, we only need to assume
each $\dot{C}_{ijkl}$ is of $C^{m,p}$ class around the origin. Fix
$x=(x_{1},x_{2},0)\in\partial\Omega$ and define $\mathcal{C}^{m,x}:=(\dot{C}_{ijkl}^{m,x})$
by 
\begin{equation}
\dot{C}_{ijkl}^{m,x}(y):=\sum_{b<m}\dfrac{\partial_{y_{3}}^{b}\dot{C}_{ijkl}(y',x_{3})}{b!}y_{3}^{b}\mbox{ for }y\mbox{ near }x.\label{eq:C_bx}
\end{equation}
Then extending this $\mathcal{C}^{m,x}$ to $\overline{\Omega}$ without
destroying the regularity and strong convexity, we denote the corresponding
localized DN map by $\Lambda_{\mathcal{C}^{m,x}}$. Similarly, we
define $\lambda^{m,x}$ and $\mu^{m,x}$ by 
\begin{equation}
\lambda^{m,x}(y):=\sum_{b<m}\dfrac{\partial_{y_{3}}^{b}\lambda(y',x_{3})}{b!}y_{3}^{b},\mbox{ }\mu^{m,x}(y):=\sum_{b<m}\dfrac{\partial_{y_{3}}^{b}\mu(y',x_{3})}{b!}y_{3}^{b}.\label{eq:Lame B}
\end{equation}
Let $\omega'=(\omega_{1},\omega_{2},0)$ be a unit tangent vector
of $\partial\Omega$ at $0$ and $\eta(y')\in C_{0}^{\infty}(\mathbb{R}^{2})$
satisfy 
\[
0\leq\eta\leq1,\mbox{ }\int_{\mathbb{R}^{2}}\eta^{2}dy'=1\mbox{ and supp}\eta\subset\{|y'|<1\}.
\]
First, by choosing suitably large $\ell\in\mathbb{N}$, we may assume
that $\dfrac{m}{\ell}=\dfrac{1}{\widetilde{\rho}}$ for some large
$\widetilde{\rho}\in\mathbb{N}$ with $\dfrac{1}{\widetilde{\rho}}<p$
and we also assume that 
\begin{equation}
(1-\dfrac{1}{\widetilde{\rho}})(m+p)\geq m+\dfrac{1}{\widetilde{\rho}}.\label{eq:smallness condition}
\end{equation}
For convenience, we denote $\rho=\dfrac{1}{\widetilde{\rho}}$ and
for large $N\in\mathbb{N}$, we put $\eta^{N}(y'):=\eta(N^{1-\rho}y')$.
For any column vector $\mathbf{a}=(a_{1},a_{2},a_{3})\in\mathbb{C}^{3}$,
let 
\begin{equation}
\phi^{N}(y):=\eta^{N}(y')\exp(\sqrt{-1}Ny'\cdot\omega')\mathbf{a}\label{eq:boundary test function}
\end{equation}
be the localized Dirichlet data around $0\in\partial\Omega$, then
we have the following theorem.
\begin{thm}
\label{Main Thm}(1) Let $\Omega$ be of $C^{1}$ class near $0\in\partial\Omega$
and let $\dot{C}_{ijkl}$ be continuous near $y=0$. Then 
\begin{equation}
\lim_{N\to\infty}\left\langle \Lambda_{\mathcal{C}}\phi^{N},\overline{\phi^{N}}\right\rangle =\sum_{i,j=1}^{3}Z_{ij}(0)a_{i}\overline{a_{j}}\label{eq:Theorem_part 1}
\end{equation}
and $Z_{ij}=\overline{Z_{ji}}$ for $1\leq i,j\leq3$, where 
\begin{eqnarray}
Z_{ii} & = & \dfrac{\mu}{\lambda+3\mu}\big(2(\lambda+2\mu)-(\lambda+\mu)\iota_{i}\big),\nonumber \\
Z_{ij} & = & \dfrac{\mu}{\lambda+3\mu}\big(-(\lambda+\mu)\iota_{i}\iota_{j}+\sqrt{-1}(-1)^{k}2\mu\,\iota_{k}\big),\mbox{ }1\leq i<j\leq3\label{eq:Z_ij}
\end{eqnarray}
with $(\iota_{1},\iota_{2},\iota_{3})=(\omega_{2},-\omega_{1},0)$
and the index $k\in\mathbb{N}$ has to satisfy $1\leq k\leq3$, $k\neq i,j$.

(2) For $m\in\mathbb{N}$, let $\partial\Omega$ be of $C^{m+2}$
class near $0\in\partial\Omega$. Let $\mathcal{C}=(\dot{C}_{ijkl})$
be of $C^{m,p}$ near $0$. Then 
\begin{align}
 & \lim_{N\to\infty}N^{m}\left\langle (\Lambda_{\mathcal{C}}-\Lambda_{\mathcal{C}^{m,0}})\phi^{N},\overline{\phi^{N}}\right\rangle \label{eq:Theorem_part 2}\\
= & \dfrac{1}{2^{m+1}}\dfrac{\partial^{m}\lambda}{\partial y_{3}^{m}}(0)\left(\sqrt{-1}\sum_{i=1}^{2}\omega_{i}a_{i}-a_{3}\right)^{2}\nonumber \\
 & +\dfrac{1}{2^{m}}\dfrac{\partial^{m}\mu}{\partial y_{3}^{m}}(0)\left[\sum_{i,j=1}^{2}\left(\dfrac{a_{i}\omega_{j}+a_{j}\omega_{i}}{2}\right)^{2}+2\sum_{i=1}^{2}\left(\dfrac{\sqrt{-1}a_{3}\omega_{i}-a_{i}}{2}\right)^{2}+1\right].\nonumber 
\end{align}
Hence from these formulae, we can recover Lamé moduli and their derivatives
up to order $m$.\end{thm}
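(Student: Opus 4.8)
The plan is to reduce both statements, via the shrinking support of $\phi^N$, to an explicit constant‑coefficient problem in the half‑space, and then to a residue‑type integral in the normal variable carried out with the Stroh machinery. The basic tool is the first integral identity: if $u^N$ solves \eqref{eq:Elasticity} with $u^N|_{\partial\Omega}=\phi^N$, then for every $w\in H^1(\Omega)$ with $w|_{\partial\Omega}=\phi^N$,
\[
\langle\Lambda_{\mathcal C}\phi^N,\overline{\phi^N}\rangle=a_{\mathcal C}(u^N,w),\qquad a_{\mathcal C}(v,w):=\int_\Omega\Big(\lambda\,\mathrm{div}\,v\,\overline{\mathrm{div}\,w}+2\mu\,\epsilon(v):\overline{\epsilon(w)}\Big)\,dy,
\]
and moreover $\langle\Lambda_{\mathcal C}\phi^N,\overline{\phi^N}\rangle=a_{\mathcal C}(u^N,u^N)=\min\{a_{\mathcal C}(w,w):w|_{\partial\Omega}=\phi^N\}$, since by strong convexity $a_{\mathcal C}$ is a positive Hermitian form and $a_{\mathcal C}(u^N,\cdot)$ annihilates $H^1_0$. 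As $a_{\mathcal C}$ is Hermitian with real coefficients, $\mathbf a\mapsto\langle\Lambda_{\mathcal C}\phi^N,\overline{\phi^N}\rangle$ is a nonnegative Hermitian form, hence so is its limit; this already yields $Z_{ij}=\overline{Z_{ji}}$.

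For part (1) I would first freeze the coefficients at $y=0$. Using the minimality characterization one sandwiches $\langle\Lambda_{\mathcal C}\phi^N,\overline{\phi^N}\rangle$ between $a_{\mathcal C}$ evaluated on the $\mathcal C$‑solution and on the $\mathcal C(0)$‑solution; because each $\dot C_{ijkl}$ is merely continuous while both solutions decay exponentially in $y_3$ at rate $\sim N$ and are cut off in $y'$ at scale $N^{\rho-1}$, the error $a_{\mathcal C}-a_{\mathcal C(0)}$ applied to either is negligible next to the main term. It then remains to compute the constant‑coefficient quantity $\langle\Lambda_{\mathcal C(0)}\phi^N,\overline{\phi^N}\rangle$: a partial Fourier transform in $y'$ reduces the Navier system to an ODE in $y_3$, whose decaying solution with trace $\mathbf a$ is assembled from the modes $e^{\sqrt{-1}Np_\alpha y_3}\mathbf b_\alpha$ given by the Stroh eigenproblem for the direction $\omega'$ (eigenvalues in the upper half plane), and reading off its conormal derivative at $y_3=0$ produces the surface‑impedance (Dirichlet‑to‑Neumann symbol) matrix $M$. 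Then $\langle\Lambda_{\mathcal C(0)}\phi^N,\overline{\phi^N}\rangle$ reduces in the limit to a quadratic form in $\mathbf a$ built from $M(\omega')$, which for the isotropic tensor \eqref{eq:isotrpic elastic tensor} simplifies to \eqref{eq:Z_ij}.

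For part (2) I would use a second, comparison, identity. Writing $u_{\mathcal C},u_{m,0}$ for the solutions with data $\phi^N$ for $\mathcal C$ and $\mathcal C^{m,0}$, the two minimality properties give
\[
(a_{\mathcal C}-a_{\mathcal C^{m,0}})(u_{\mathcal C},u_{\mathcal C})\ \le\ \langle(\Lambda_{\mathcal C}-\Lambda_{\mathcal C^{m,0}})\phi^N,\overline{\phi^N}\rangle\ \le\ (a_{\mathcal C}-a_{\mathcal C^{m,0}})(u_{m,0},u_{m,0}),
\]
so it suffices that both bounds, multiplied by $N^m$, converge to the right‑hand side of \eqref{eq:Theorem_part 2}. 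The form $a_{\mathcal C}-a_{\mathcal C^{m,0}}$ has coefficients $\dot C-\dot C^{m,0}=\tfrac{y_3^m}{m!}\partial_{y_3}^m\dot C(y',0)+O(y_3^{m+p})$ by the $C^{m,p}$ hypothesis; on $\operatorname{supp}\phi^N$ one replaces these by $\tfrac{y_3^m}{m!}\partial_{y_3}^m\dot C(0)$ and replaces $u_{m,0}$ by the constant‑coefficient solution $u_0$ of part (1), the $O(y_3^{m+p})$ remainder, the freezing error in $y'$, and the defect $a_{\mathcal C}(u_{\mathcal C}-u_{m,0},u_{\mathcal C}-u_{m,0})$ all being $o(N^{-m})$; this is precisely where condition \eqref{eq:smallness condition} enters, being calibrated to beat the loss of derivatives in the $N$‑tracked elliptic estimates against the H\"older gain $y_3^{\,p}\sim N^{-p(1-\rho)}$. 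Finally one evaluates $\int_\Omega\tfrac{y_3^m}{m!}\partial_{y_3}^m\dot C(0)\,\partial_l(u_0)_k\,\overline{\partial_j(u_0)_i}\,dy$ with the explicit Stroh modes of $u_0$, the normal integral being $\int_0^\infty y_3^m e^{\sqrt{-1}N(p_\alpha-\overline{p_\beta})y_3}\,dy_3$, and separates the contributions of $\partial_{y_3}^m\lambda(0)$ and $\partial_{y_3}^m\mu(0)$ to arrive at \eqref{eq:Theorem_part 2}. The curved‑boundary case follows, as the authors indicate, by passing to boundary normal coordinates, which only transforms the coefficients and the normal.

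The main obstacle is the explicit half‑space solution for \emph{isotropic} elasticity, the degenerate case of the Stroh formalism: for a tangential direction $\omega'$ the characteristic polynomial in the normal slowness has $\sqrt{-1}$ as a triple root whose eigenspace is only two‑dimensional, so the decaying solution must carry a generalized mode of the form $y_3\,e^{\sqrt{-1}Np y_3}$; handling this (by a Jordan‑chain construction, or by a limit from the anisotropic case) is what produces the combination $\lambda+3\mu$ in \eqref{eq:Z_ij} and what alters the normal‑integral bookkeeping in both parts. The second, more routine but still delicate, point is that the several $N$‑dependent error estimates in part (2) must all hold simultaneously, which is essentially the purpose of the smallness condition \eqref{eq:smallness condition}.
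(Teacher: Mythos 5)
Your proposal is essentially correct but follows a genuinely different organizational route from the paper. The paper does not use the energy‑minimization sandwich. Instead it uses a \emph{cross} identity: writing $u^N,v^N$ for the exact solutions with data $\phi^N$ for $\mathcal C$ and $\mathcal C^{m,0}$, and $\zeta_N\Phi^N,\zeta_N\Psi^N$ for the corresponding approximate solutions of Section~2, it expresses
\[
\left\langle(\Lambda_{\mathcal C}-\Lambda_{\mathcal C^{m,0}})\phi^N,\overline{\phi^N}\right\rangle
= a_{\mathcal C}(u^N,\zeta_N\Psi^N)-a_{\mathcal C^{m,0}}(v^N,\zeta_N\Phi^N),
\]
i.e.\ the approximate solution of the \emph{other} operator is inserted as the test function, and then substitutes $u^N=\zeta_N\Phi^N+f^N$, $v^N=\zeta_N\Psi^N+g^N$ to produce a main term $I$ and remainders $II,III$. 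In $II$, integration by parts moves $\mathcal L$ onto $\Phi^N$, whose residual is pointwise small, and Hardy's inequality on $f^N/y_3$ yields the needed decay; the crude estimate $\|f^N\|_{H^1_0}\lesssim N^{-1/2+\rho}$ of the paper's Lemma~3.3 is then sufficient. Your sandwich reaches the same conclusion but the error bookkeeping is different and, at the level of detail given, you are glossing over the crucial step: in your decomposition the pure remainder $(a_{\mathcal C}-a_{\mathcal C^{m,0}})(f^N,f^N)$ is not localized near $y=0$ (the coefficient $\dot C-\dot C^{m,0}$ is $O(1)$ away from the origin), so the paper's crude $\|f^N\|_{H^1}\lesssim N^{-1/2+\rho}$ gives $N^m(a_{\mathcal C}-a_{\mathcal C^{m,0}})(f^N,f^N)=O(N^{m-1+2\rho})$, which blows up. To make the sandwich close you must upgrade to the Hardy‑based $H^{-1}$ estimate
\[
\|f^N\|_{H^1_0}\lesssim\|\mathcal L(\zeta_N\Phi^N)\|_{H^{-1}}\lesssim\|y_3\,\mathcal L(\zeta_N\Phi^N)\|_{L^2}\lesssim N^{-\frac12-m},
\]
which uses the pointwise residual bound \eqref{eq:Decay Estimate for approx. Sol} and only then does $(a_{\mathcal C}-a_{\mathcal C^{m,0}})(f^N,f^N)=O(N^{-1-2m})=o(N^{-m})$. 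Related to this, your remark that the dissipation $a_{\mathcal C}(u_{\mathcal C}-u_{m,0},u_{\mathcal C}-u_{m,0})$ is $o(N^{-m})$ is not a consequence of the estimates you invoke and needs the same upgrade. With that repair, both routes work; the paper's cross decomposition is arguably cleaner because Hardy is applied exactly once, to $\int f^N\mathcal L\Phi^N$, whereas your sandwich invokes it twice (once to localize the bound on $\|f^N\|$ and once implicitly in the residual estimate). Your observation that the limit is automatically a nonnegative Hermitian form in $\mathbf a$, hence $Z_{ij}=\overline{Z_{ji}}$, and that part~(1) amounts to computing the isotropic surface‑impedance matrix with the Jordan‑chain construction for the degenerate Stroh eigenvalue $\sqrt{-1}$, are both consistent with what the paper does (part~(1) is cited from Tanuma's monograph rather than reproved).
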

\begin{rem}
We remark here that the above boundary determination formulae (\ref{eq:Theorem_part 1})
and (\ref{eq:Theorem_part 2}) are given in terms of the leading part
of the equations of system. Further (\ref{eq:Theorem_part 1}) was
proved in the Section 2 of \cite{tanuma2007stroh} and (\ref{eq:Z_ij})
was shown in Theorem 1.24 of \cite{tanuma2007stroh}, so we omit their
proofs. We will only prove (\ref{eq:Theorem_part 2}). 
\end{rem}
The rest of this paper is organized as follows. In Section \ref{Section 2},
we construct an approximate solution of $\mathcal{L}u=0$ with $u=\phi^{N}$
on $\partial\Omega$. By using this special solution, we will prove
Theorem 1.1 for the flat boundary case in Section \ref{Section 3}.
Finally, in Section \ref{Section 4}, we will demonstrate deriving
the reconstruction formulae in terms of the boundary normal coordinates
for the non-flat boundary case in the lasts section. 

\textbf{}\\
\textbf{Acknowledgments.} Y. H. Lin is partially supported by MOST
of Taiwan 160-2917-I-564-048. G. Nakamura would like to thank the
support from National Center for Theoretical Sciences (NCTS) for his
stay in National Taiwan University, Taipei, Taiwan and Grant-in-Aid
for Scientific Research (15K21766 and 15H05740) of the Japan Society
for the Promotion of Science.

\section{Construction of approximate solutions\label{Section 2}}

In order to prove Theorem 1, we need to construct an approximate solution
depending on a large parameter $N$. Let 
\begin{equation}
\Omega_{N}:=\left\{ y:|y_{1}|,|y_{2}|\leq N^{\rho-1},\mbox{ }0\leq y_{3}\leq\dfrac{1}{\sqrt{N}}\right\} .\label{eq:Omega N}
\end{equation}
and $\alpha$ be a multi-index such that 
\[
\alpha=(1-\rho,1-\rho,1)\mbox{ and }N^{\alpha}y=(N^{1-\rho}y_{1},N^{1-\rho}y_{2},Ny_{3})=(N^{1-\rho}y',Ny_{3}).
\]
Inspired by \cite{kang2002boundary,nakamura2001local}, we can prove
the following lemma. 
\begin{lem}
For each $N\in\mathbb{N}$, there exists for any column vector $\mathbf{a}=(a_{1},a_{2},a_{3})\in\mathbb{C}^{3}$
an approximate solution $\Phi^{N}$ of \eqref{eq:Elasticity} near
$0$ of the form 
\begin{equation}
\Phi^{N}(y)=e^{\sqrt{-1}Ny'\cdot\omega'}e^{-Ny_{3}}\left\{ \eta^{N}(y')\,\mathbf{a}+\sum_{n=1}^{\frac{m}{\rho}}N^{-n\rho}v_{n}(N^{\alpha}y)\right\} \label{eq:Approximate Solution}
\end{equation}
with $\Phi^{N}|_{\partial\Omega}=\phi^{N}=e^{\sqrt{-1}Ny'\cdot\omega'}\eta^{N}(y')\,\mathbf{a}$,
where each vector $v_{n}(N^{\alpha}y)$ is polynomial in $Ny_{3}$
with coefficients which are $C^{\infty}$-smooth functions of $N^{1-\rho}y'$
supported in $\{|y'|<N^{\rho-1}\}$ for $n=1,2,\cdots,\dfrac{m}{\rho}$
and 
\begin{equation}
\left|(\mathcal{L}\Phi^{N})(y)\right|\leq CN^{2-m-\rho}\mathcal{P}(Ny_{3})e^{-Ny_{3}},\,\,y\in\Omega_{N}\label{eq:Decay Estimate for approx. Sol}
\end{equation}
for some constant $C=C(m)>0$. Here $\mathcal{P}(Ny_{3})$ is a polynomial
with non-negative coefficients.\end{lem}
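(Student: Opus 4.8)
The plan is to build $\Phi^{N}$ by a boundary-layer (geometric-optics) expansion in the small parameter $N^{-\rho}$, in the spirit of the conductivity constructions of \cite{kang2002boundary,nakamura2001local} but now carrying the $3\times 3$ system along. Write $\Phi^{N}(y)=e^{N\psi(y)}\,W(N^{\alpha}y)$ with the complex phase $\psi(y)=\sqrt{-1}\,\omega'\cdot y'-y_{3}$, so that $e^{N\psi}$ oscillates tangentially at frequency $N$ and decays like $e^{-Ny_{3}}$ across $\partial\Omega$; the normal rate $-1$ is forced because $\nabla\psi\cdot\nabla\psi=1-|\omega'|^{2}=0$, i.e. $\nabla\psi$ is (up to the sign of its normal component) the characteristic complex covector producing a decaying mode of the leading elasticity symbol. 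The anisotropic dilation $z=N^{\alpha}y=(N^{1-\rho}y',Ny_{3})$ contracts the tangential window $\{|y'|<N^{\rho-1}\}$ to the fixed ball $\{|z'|<1\}$, in which $\eta^{N}(y')$ becomes $\eta(z')$, and stretches the normal variable so that the boundary layer sits at $z_{3}=O(1)$ (the factor $e^{-Ny_{3}}=e^{-z_{3}}$ making $z_{3}\gg1$ negligible); and -- the key point -- it makes each tangential cut-off derivative $\partial_{y_{j}}\eta^{N}$ carry a gain $N^{1-\rho}$, i.e. $N^{-\rho}$ relative to the $N$ produced by $\partial_{y_{j}}e^{N\psi}$. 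That $N^{-\rho}$ is the parameter of the amplitude expansion $W=W_{0}+\sum_{n\ge1}N^{-n\rho}v_{n}$.

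Conjugating and rescaling, a direct computation yields $e^{-N\psi}\mathcal{L}\bigl(e^{N\psi}W(N^{\alpha}\cdot)\bigr)=N^{2}\bigl(L_{0}+N^{-\rho}L_{1}+N^{-2\rho}L_{2}+\cdots\bigr)W$, where $L_{0}$ is the frozen elasticity symbol with the tangential slot filled by $\sqrt{-1}\omega'$ and the normal slot by $\partial_{z_{3}}-1$ -- a constant-coefficient second-order ODE operator in $z_{3}$ built from $\dot C_{ijkl}(0)$ -- and the $L_{k}$, $k\ge1$, collect the tangential derivatives of the profiles and of $\eta$ together with the Taylor coefficients of $\dot C_{ijkl}$ about $0$, so only finitely many $L_{k}$ are active before the truncated Taylor remainder of $\dot C$ takes over. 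The leading profile solves $L_{0}W_{0}=0$ with $W_{0}|_{z_{3}=0}=\eta(z')\,\mathbf{a}$: here the Stroh input enters, since for the isotropic system the characteristic root $p=\sqrt{-1}$ is degenerate -- equivalently, constant-coefficient static displacements are biharmonic, so the decaying boundary-layer fields at fixed tangential frequency $N$ are $e^{-Ny_{3}}$ times an affine function of $Ny_{3}$ -- so that $W_{0}=\eta(z')\bigl(\mathbf{a}+z_{3}\,\mathbf{b}(\mathbf{a})\bigr)$ with $\mathbf{b}(\mathbf{a})$ linear in $\mathbf{a}$ and determined by $\omega'$, $\lambda(0)$, $\mu(0)$ through the surface-impedance (Stroh) algebra of \cite{tanuma2007stroh}. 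One then solves the hierarchy recursively: for $n\ge1$, $L_{0}v_{n}=F_{n}\bigl(v_{0},\dots,v_{n-1};\partial\eta\bigr)$ with $F_{n}$ polynomial in $z_{3}$; each $v_{n}$ is taken polynomial in $z_{3}$, with coefficients $C^{\infty}$ in $z'$ supported in $\{|z'|<1\}$, and normalized so that $v_{n}(z',0)=0$, in order that $\Phi^{N}|_{\partial\Omega}=\phi^{N}$ be preserved. Since $L_{0}$ is \emph{not} invertible, at each step one must select the polynomial particular solution and absorb the solvability obstruction into the remaining homogeneous mode.

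Truncating at $n=m/\rho$ -- an integer, thanks to the choice $m/\ell=\rho$ -- leaves in $e^{-N\psi}\mathcal{L}\Phi^{N}$ exactly three error sources: (i) the first omitted transport term, of size $N^{2}\cdot N^{-(m/\rho+1)\rho}=N^{2-m-\rho}$; (ii) the error from replacing each $\dot C_{ijkl}$ by its Taylor polynomial about $0$, which on $\Omega_{N}$ -- using $|y'|\le N^{\rho-1}$ and, in the normal direction, the solution's own factor $e^{-Ny_{3}}$ -- is bounded by $N^{-(1-\rho)(m+p)}+N^{-(m+p)}$, hence after the $N^{2}$ from two derivatives is $\le N^{2-m-\rho}$ precisely because $(1-\rho)(m+p)\ge m+\rho$ by \eqref{eq:smallness condition} (note this forces $\rho<p$ as well); and (iii) the cut-off commutator terms, which are smooth, compactly supported in $z'$, and already incorporated into the $L_{k}$, hence produce nothing worse. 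Re-inserting $e^{N\psi}=e^{\sqrt{-1}N\omega'\cdot y'}e^{-Ny_{3}}$ and bounding every polynomial in $z_{3}=Ny_{3}$ by a fixed polynomial $\mathcal{P}$ with non-negative coefficients gives \eqref{eq:Decay Estimate for approx. Sol}.

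The hard part will be the degenerate Stroh step: because the principal symbol of the isotropic system at the decaying complex covector has rank one (two-dimensional kernel, algebraic multiplicity three), the leading transport operator $L_{0}$ carries a non-trivial Jordan structure and cannot simply be inverted; one must exhibit the generalized-eigenvector chain explicitly, solve the profile equations within polynomials in $z_{3}$, and verify at every order that the forcing lies in the range of $L_{0}$ modulo the one-parameter homogeneous freedom. The remaining work -- the power counting across the two dilation rates $1-\rho$ and $1$, and checking that \eqref{eq:smallness condition} together with $\rho<p$ and $m/\rho\in\mathbb{N}$ make every remainder fit under $N^{2-m-\rho}$ -- is bookkeeping of the kind already carried out in the conductivity references.
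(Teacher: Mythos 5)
Your proposal follows essentially the same route as the paper's proof: the same WKB-type ansatz with phase $\sqrt{-1}N\omega'\cdot y'-Ny_{3}$ and amplitude expanded in powers of $N^{-\rho}$, the same anisotropic rescaling $z=N^{\alpha}y$, the same hierarchy of transport operators $L_{0},L_{1},\dots$ built from the Taylor coefficients of $\dot C_{ijkl}$ at $0$, the Stroh analysis of the degenerate triple eigenvalue $\Sigma=\sqrt{-1}$ to produce the leading profile (a linear-in-$z_{3}$ polynomial times $e^{-z_{3}}$), the recursive polynomial-in-$z_{3}$ solve for the $v_{n}$ with $v_{n}|_{z_{3}=0}=0$, and the power count using \eqref{eq:smallness condition} to push the Taylor remainder and the first omitted transport term below $N^{2-m-\rho}$. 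The one place where your language slightly overstates the difficulty is the ``solvability obstruction'' at higher orders: the paper's computation $L_{0}\bigl(z_{3}^{d}e^{-z_{3}}\bigr)=e^{-z_{3}}\bigl(z_{3}^{d-1}R_{1}^{d}+z_{3}^{d-2}R_{2}^{d}\bigr)$ with \emph{invertible} $R_{1}^{d},R_{2}^{d}$ shows that, under the ansatz $v_{n}=e^{-z_{3}}\sum_{d=1}^{n+1}z_{3}^{d}P_{n}^{d}(z')$, the coefficient system is triangular and always solvable, so the Jordan degeneracy of $K^{0}$ is felt only in the structure of $V^{0}$ and produces no genuine range obstruction to absorb in the recursion.
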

\begin{proof}
We look for $\Phi^{N}=(\Phi_{1}^{N},\Phi_{2}^{N},\Phi_{3}^{N})\in\mathbb{C}^{3}$
of the form 
\begin{equation}
\Phi_{k}^{N}(y)=e^{\sqrt{-1}Ny'\cdot\omega'}\mathbb{V}_{k}(N^{1-\rho}y',Ny_{3}).\label{eq:Appro Sol}
\end{equation}
Then 
\begin{align}
 & (\mathcal{L}\Phi^{N})_{i}\nonumber \\
= & \sum_{j,k,l=1}^{3}\dfrac{\partial}{\partial y_{j}}(\dot{C}_{ijkl}\dfrac{\partial}{\partial y_{l}}\Phi_{k}^{N})\nonumber \\
= & \sum_{k=1}^{3}\Big\{\sum_{j,l=1}^{2}\dot{C}_{ijkl}(\dfrac{\partial^{2}}{\partial y_{j}\partial y_{l}}\Phi_{k}^{N})+\sum_{j=1}^{2}\dot{C}_{ijk3}(\dfrac{\partial^{2}}{\partial y_{j}\partial y_{3}}\Phi_{k}^{N})+\sum_{l=1}^{2}\dot{C}_{i3kl}(\dfrac{\partial^{2}}{\partial y_{3}\partial y_{l}}\Phi_{k}^{N})\nonumber \\
 & +\dot{C}_{i3k3}\dfrac{\partial^{2}}{\partial y_{3}^{2}}\Phi_{k}^{N}+\sum_{j=1}^{3}\sum_{l=1}^{2}\big(\dfrac{\partial}{\partial y_{j}}\dot{C}_{ijkl}\big)\dfrac{\partial}{\partial y_{l}}\Phi_{k}^{N}+\sum_{j=1}^{3}\big(\dfrac{\partial}{\partial y_{j}}\dot{C}_{ijk3}\big)\dfrac{\partial}{\partial y_{3}}\Phi_{k}^{N}\Big\}.\label{eq:Expansion 1}
\end{align}
Substituting (\ref{eq:Appro Sol}) into (\ref{eq:Expansion 1}), we
have 
\begin{align}
 & (\mathcal{L}\Phi^{N})_{i}\nonumber \\
= & e^{\sqrt{-1}Ny'\cdot\omega'}\sum_{k=1}^{3}\Big[-N^{2}\sum_{j,l=1}^{2}\dot{C}_{ijkl}\omega_{j}\omega_{l}+\sqrt{-1}N\Big\{\sum_{j,l=1}^{2}\dot{C}_{ijkl}(\omega_{j}\dfrac{\partial}{\partial y_{l}}+\omega_{l}\dfrac{\partial}{\partial y_{j}})\nonumber \\
 & +\sum_{j=1}^{2}\dot{C}_{ijk3}\omega_{j}\dfrac{\partial}{\partial y_{3}}+\sum_{l=1}^{2}\dot{C}_{i3kl}\omega_{l}\dfrac{\partial}{\partial y_{3}}+\sum_{j,l=1}^{2}\big(\dfrac{\partial}{\partial y_{j}}\dot{C}_{ijkl}\big)\omega_{l}\Big\}\nonumber \\
 & +\sum_{j=1}^{2}\dot{C}_{ijk3}\dfrac{\partial^{2}}{\partial y_{j}\partial y_{3}}+\sum_{l=1}^{2}\dot{C}_{i3kl}\dfrac{\partial^{2}}{\partial y_{l}\partial y_{3}}+\sum_{j,l=1}^{2}\dot{C}_{ijkl}\dfrac{\partial^{2}}{\partial y_{j}\partial y_{l}}+\dot{C}_{i3k3}\dfrac{\partial^{2}}{\partial y_{3}^{2}}\nonumber \\
 & +\sum_{j=1}^{3}\sum_{l=1}^{2}\big(\dfrac{\partial}{\partial y_{j}}\dot{C}_{ijkl}\big)\dfrac{\partial}{\partial y_{l}}+\sum_{j=1}^{3}\big(\dfrac{\partial}{\partial y_{j}}\dot{C}_{ijk3}\big)\dfrac{\partial}{\partial y_{3}}\Big]\mathbb{V}_{k}(N^{1-\rho}y',Ny_{3}).\label{eq:Expansion 2}
\end{align}
Now, we introduce the scaled variables 
\begin{equation}
z_{i}=N^{1-\rho}y_{i}\mbox{ for }i=1,2\mbox{ and }z_{3}=Ny_{3},\label{eq:scaling}
\end{equation}
which implies 
\begin{equation}
\dfrac{\partial}{\partial y_{i}}=N^{1-\rho}\dfrac{\partial}{\partial z_{i}}\mbox{ for }i=1,2\mbox{ and }\dfrac{\partial}{\partial y_{3}}=N\dfrac{\partial}{\partial z_{3}}.
\end{equation}
We refer \eqref{eq:scaling} by \textit{scaling}. Then (\ref{eq:Expansion 2})
becomes 
\begin{align}
 & (\mathcal{L}\Phi^{N})_{i}\nonumber \\
= & e^{\sqrt{-1}Ny'\cdot\omega'}\sum_{k=1}^{3}\left\{ \left[N^{2}(\dot{C}_{i3k3}\dfrac{\partial^{2}}{\partial z_{3}^{2}}+\sqrt{-1}(\sum_{j=1}^{2}\dot{C}_{ijk3}\omega_{j}\dfrac{\partial}{\partial z_{3}}+\sum_{l=1}^{2}\dot{C}_{i3kl}\omega_{l}\dfrac{\partial}{\partial z_{3}}\right.\right.\nonumber \\
 & -\sum_{j,l=1}^{2}\dot{C}_{ijkl}\omega_{j}\omega_{l})+N^{2-\rho}(\sqrt{-1}\sum_{j,l=1}^{2}\dot{C}_{ijkl}(\omega_{j}\dfrac{\partial}{\partial z_{l}}+\omega_{l}\dfrac{\partial}{\partial z_{j}})\nonumber \\
 & +\sum_{j=1}^{2}\dot{C}_{ijk3}\dfrac{\partial^{2}}{\partial z_{j}\partial z_{3}}+\sum_{l=1}^{2}\dot{C}_{i3kl}\dfrac{\partial^{2}}{\partial z_{l}\partial z_{3}})+N^{2-2\rho}\sum_{j,l=1}^{2}\dot{C}_{ijkl}\dfrac{\partial^{2}}{\partial z_{j}\partial z_{l}}\nonumber \\
 & +N[\sqrt{-1}\sum_{j,l=1}^{2}\left(\dfrac{\partial}{\partial y_{j}}\dot{C}_{ijkl}\right)\omega_{l}+\sum_{j=1}^{3}\left(\dfrac{\partial}{\partial y_{j}}\dot{C}_{ijk3}\right)\dfrac{\partial}{\partial z_{3}}]\nonumber \\
 & \left.\left.+N^{1-\rho}\sum_{j=1}^{3}\sum_{l=1}^{2}\left(\dfrac{\partial}{\partial y_{j}}\dot{C}_{ijkl}\right)\dfrac{\partial}{\partial z_{l}}\right]\mathbb{V}_{k}(z',z_{3})\right\} .\label{eq:Expansion 3}
\end{align}
On the other hand, expand $\dot{C}_{ijkl}(y)$ of and $\dfrac{\partial}{\partial y_{n}}\dot{C}_{ijkl}(y)$
for $n=1,2,3$ into Taylor's series around $y=0$. Let $\beta\in(\mathbb{N}\cup\{0\})^{3}$
be a multi-index, then we have 
\[
\dot{C}_{ijkl}(y)=\sum_{|\beta|\leq m}\dfrac{1}{\beta!}\dfrac{\partial^{\beta}}{\partial y^{\beta}}C_{ijkl}(0)y^{\beta}+O(|y|^{m+p}),
\]
and for $n=1,2,3$, 
\[
\dfrac{\partial}{\partial y_{n}}\dot{C}_{ijkl}(y)=\sum_{|\beta|\leq m-1}\dfrac{1}{\beta!}\dfrac{\partial^{\beta}}{\partial y^{\beta}}\left(\dfrac{\partial}{\partial y_{n}}\dot{C}_{ijkl}\right)(0)y^{\mathbf{\beta}}+O(|y|^{m-1+p}).
\]
Recall that we have posed the condition (\ref{eq:smallness condition}),
which is $m+\rho\leq(1-\rho)(m+p).$ Thus, via (\ref{eq:scaling}),
we obtain 
\begin{equation}
\dot{C}_{ijkl}(y)=\sum_{|\beta|\leq m}N^{-\beta\cdot\alpha}\dfrac{1}{\beta!}\dfrac{\partial^{\beta}}{\partial z^{\beta}}\dot{C}_{ijkl}(0)z^{\beta}+R_{1}(z),\label{eq:Taylor 1}
\end{equation}
where 
\[
|R_{1}(z)|=O(N^{-m-\rho}).
\]
Similarly, for $n=1,2,3$, we have 
\begin{equation}
\dfrac{\partial}{\partial y_{n}}\dot{C}_{ijkl}(y)=\sum_{|\beta|\leq m}N^{-\beta\cdot\alpha}\dfrac{1}{\beta!}\dfrac{\partial^{\beta}}{\partial z^{\beta}}\left(\dfrac{\partial}{\partial y_{n}}\dot{C}_{ijkl}\right)(0)z^{\beta}+R_{2}(z),\label{eq:Taylor 2}
\end{equation}
where 
\[
|R_{2}(z)|=O(N^{-m-\rho+1}).
\]
Note that for the power of $N$ in the expansion (\ref{eq:Taylor 1})
and (\ref{eq:Taylor 2}) are of the form $-M\rho$ for some $M\in\mathbb{N}\cup\{0\}$.
Thus, we combine (\ref{eq:Expansion 3}), (\ref{eq:Taylor 1}), (\ref{eq:Taylor 2})
and $\mathbb{V}:=(\mathbb{V}_{1},\mathbb{V}_{2},\mathbb{V}_{3})^{t}$,
then 
\begin{equation}
\mathcal{L}\Phi^{N}=e^{\sqrt{-1}Ny'\cdot\omega'}\left[\sum_{s=0}^{\frac{m}{\rho}}N^{2-s\rho}L_{s}+L_{R}\right]\mathbb{V},\label{eq:Expansion 4}
\end{equation}
where $\mathcal{L}\Phi^{N}=((\mathcal{L}\Phi^{N})_{1},(\mathcal{L}\Phi^{N})_{2},(\mathcal{L}\Phi^{N})_{3})^{t}$
and $L_{s}$ ($s=0,1,2,\cdots,\frac{m}{\rho}$) are at most second
order matrix differential operators in $z'$ and $z_{3}$ with coefficients
depending on $y'$ and $y_{3}$. In particular $L_{0},\,L_{1},\,L_{2}$
are given by 
\begin{eqnarray*}
L_{0} & = & -\left(\dot{C}_{i3k3}(0)D_{3}^{2}+\left[\sum_{j=1}^{2}\dot{C}_{ijk3}(0)\omega_{j}\right.\right.\\
 &  & \left.\left.+\sum_{l=1}^{2}\dot{C}_{i3kl}(0)\omega_{l}\right]D_{3}+\sum_{j,l=1}^{2}\dot{C}_{ijkl}(0)\omega_{j}\omega_{l}\right)_{1\le i,k\le3},\\
L_{1} & = & \left(\sqrt{-1}\sum_{j,l=1}^{2}\dot{C}_{ijkl}(0)(\omega_{j}\dfrac{\partial}{\partial z_{l}}+\omega_{l}\dfrac{\partial}{\partial z_{j}})\right.\\
 &  & \left.+\sum_{j=1}^{2}\dot{C}_{ijk3}(0)\dfrac{\partial^{2}}{\partial z_{j}\partial z_{3}}+\sum_{l=1}^{2}\dot{C}_{i3kl}(0)\dfrac{\partial^{2}}{\partial z_{l}\partial z_{3}}\right)_{1\le i,k\le3},\\
L_{2} & = & \left(\sum_{j,l=1}^{2}\dot{C}_{ijkl}(0)\dfrac{\partial^{2}}{\partial z_{j}\partial z_{l}}\right)_{1\le i,k\le3},
\end{eqnarray*}
where $D_{3}=-\sqrt{-1}\dfrac{\partial}{\partial z_{3}}$. Moreover,
$L_{R}$ is a second order differential operator in $z$ and its coefficients
are of order $O(N^{2-m-\rho})$.

Now, we look for $\mathbb{V}(z',z_{3})$ of the form 
\begin{equation}
\mathbb{V}(z)=\sum_{n=0}^{\frac{m}{\rho}}N^{-\rho n}V^{n}(z),\label{V(z)}
\end{equation}
where $V^{n}(z)=(V_{1}^{n}(z),V_{2}^{n}(z),V_{3}^{n}(z))^{t}\in\mathbb{C}^{3}$.
By (\ref{eq:Expansion 4}) and equating the coefficients in each term
of order $N^{m}$, we have 
\begin{eqnarray}
\mathcal{L}\Phi^{N} & = & e^{\sqrt{-1}Ny'\cdot\omega'}\left[\left(\sum_{s=0}^{\frac{m}{\rho}}N^{2-s\rho}L_{s}\right)\left(\sum_{n=0}^{\frac{m}{\rho}}N^{-\rho n}V^{n}\right)+L_{R}\mathbb{V}\right]\label{eq:Expansion 5}\\
 & = & e^{\sqrt{-1}Ny'\cdot\omega'}\left[\sum_{r=0}^{\frac{m}{\rho}}N^{2-r\rho}\sum_{n+s=r}L_{s}V^{n}+\mathcal{R}\right],\nonumber 
\end{eqnarray}
where 
\[
\mathcal{R}:=\sum_{r=\frac{m}{\rho}+1}^{\frac{2m}{\rho}}N^{2-r\rho}\sum_{n+s=r}L_{s}V^{n}+L_{R}\mathbb{V}.
\]
Therefore, we have obtained the following ordinary differential equations
of systems (ODE systems) of second order with respect to $z_{3}$
\begin{align}
 & L_{0}V^{0}=0,\nonumber \\
 & L_{0}V^{1}+L_{1}V^{0}=0,\nonumber \\
 & L_{0}V^{2}+L_{1}V^{1}+L_{2}V^{0}=0,\label{eq:ODE systems}\\
 & \cdots\nonumber \\
 & L_{0}V^{\frac{m}{\rho}}+\cdots+L_{\frac{m}{\rho}}V^{0}=0,\nonumber 
\end{align}
with boundary conditions 
\begin{align*}
 & V^{0}|_{z_{3}=0}=\eta^{N}(y')\,\mathbf{a}=\eta(z')\,\mathbf{a},\\
 & V^{n}|_{z_{3}=0}=0\mbox{ for }n=1,2,\cdots,\dfrac{m}{\rho}.
\end{align*}
Note that this undetermined boundary value problem (\ref{eq:ODE systems})
can be made determined if we look for solutions which are bounded
in $z_{3}\in[0,\infty)$.

First, by using the Stroh formalism which is for instance given in
\cite{NUW2005(ODS)}, we can solve $L_{0}V^{0}=0$ with $V^{0}(z',0)=\eta(z')$
in the following way. Let $\xi=(\xi_{1},\xi_{2},\xi_{3})$, $\zeta=(\zeta_{1},\zeta_{2},\zeta_{3})\in\mathbb{R}^{3}$
and we define the $3\times3$ matrix $\left\langle \xi,\zeta\right\rangle $
by 
\[
\left\langle \xi,\zeta\right\rangle =(\left\langle \xi,\zeta\right\rangle _{ik})\,\,\text{with}\,\,\left\langle \xi,\zeta\right\rangle _{ik}=\sum_{1\le j,\,l\le3}\dot{C}_{ijkl}(y',y_{3})\xi_{j}\,\zeta_{l}.
\]
Also, if we set $\left\langle \xi,\zeta\right\rangle _{0}:=\left\langle \xi,\zeta\right\rangle |_{y=0}$
, $e_{3}:=(0,0,1)$ and $\omega:=(\omega_{1},\omega_{2},0)$, we can
rewrite 
\[
L_{0}V^{0}=-\left[\left\langle e_{3},e_{3}\right\rangle _{0}D_{3}^{2}+\left(\left\langle e_{3},\omega\right\rangle _{0}+\left\langle \omega,e_{3}\right\rangle _{0}\right)D_{3}+\left\langle \omega,\omega\right\rangle _{0}\right]V^{0}=0.
\]
Let $W_{1}^{0}=V^{0}$, $W_{2}^{0}=-\{\left\langle e_{3},e_{3}\right\rangle _{0}D_{3}V^{0}+\left\langle e_{3},\omega\right\rangle _{0}V^{0}\}$,
then by direct calculation, then we have 
\begin{equation}
D_{3}W_{1}^{0}=-\left\langle e_{3},e_{3}\right\rangle _{0}^{-1}\left[\left\langle e_{3},\omega\right\rangle _{0}W_{1}^{0}-W_{0}^{2}\right]\label{eq:W_1}
\end{equation}
and 
\begin{equation}
D_{3}W_{2}^{0}=\left[\left\langle \omega,\omega\right\rangle _{0}-\left\langle \omega,e_{3}\right\rangle _{0}\left\langle e_{3},e_{3}\right\rangle _{0}^{-1}\left\langle e_{3},\omega\right\rangle _{0}\right]W_{1}^{0}-\left\langle \omega,e_{3}\right\rangle _{0}\left\langle e_{3},e_{3}\right\rangle _{0}^{-1}W_{2}^{0}.\label{eq:W_2}
\end{equation}
Combine (\ref{eq:W_1}), (\ref{eq:W_2}) and define the column vector
$W^{0}:=[W_{1}^{0},W_{2}^{0}]$, then we obtain 
\begin{equation}
D_{3}W^{0}=K^{0}W^{0},\label{eq:First order}
\end{equation}
where 
\[
K^{0}=\left[\begin{array}{cc}
-\left\langle e_{3},e_{3}\right\rangle _{0}^{-1}\left\langle e_{3},\omega\right\rangle _{0} & -\left\langle e_{3},e_{3}\right\rangle _{0}^{-1}\\
-\left\langle \omega,\omega\right\rangle _{0}+\left\langle \omega,e_{3}\right\rangle _{0}\left\langle e_{3},e_{3}\right\rangle _{0}^{-1}\left\langle e_{3},\omega\right\rangle _{0} & -\left\langle \omega,e_{3}\right\rangle _{0}\left\langle e_{3},e_{3}\right\rangle _{0}^{-1}
\end{array}\right].
\]
Note that $K^{0}$ is a $6\times6$ matrix-valued function independent
of $z_{3}$ variable and its eigenvalues are determined by 
\begin{equation}
\det(\Sigma I_{6}-K^{0})=0,\label{eq:DET}
\end{equation}
where $I_{6}$ is a $6\times6$ identity matrix and (\ref{eq:DET})
is equivalent to 
\begin{equation}
\det\left[\left\langle e_{3},e_{3}\right\rangle _{0}\Sigma^{2}+\left(\left\langle e_{3},\omega\right\rangle _{0}+\left\langle \omega,e_{3}\right\rangle _{0}\right)\Sigma+\left\langle \omega,\omega\right\rangle _{0}\right]=0.\label{eq:DET2}
\end{equation}
By using the results of \cite[Chapter 1.8]{tanuma2007stroh} , we
have 
\begin{align*}
 & \det\left[\left\langle e_{3},e_{3}\right\rangle _{0}\Sigma^{2}+\left(\left\langle e_{3},\omega\right\rangle _{0}+\left\langle \omega,e_{3}\right\rangle _{0}\right)\Sigma+\left\langle \omega,\omega\right\rangle _{0}\right]\\
= & \mu^{2}(0)(\lambda+2\mu)(0)(1+\Sigma^{2})^{3},
\end{align*}
which means solving (\ref{eq:DET}) is equivalent to solve 
\[
(1+\Sigma^{2})^{3}=0
\]
and use the strong convexity condition (\ref{eq:Strong Convex}),
then it gives that the roots are $\Sigma=\pm\sqrt{-1}$. Moreover,
we can find eigenvectors $\{\widetilde{q_{1}^{+}},\widetilde{q_{2}^{+}},\widetilde{q_{3}^{+}},\widetilde{q_{1}^{-}},\widetilde{q_{2}^{-}},\widetilde{q_{3}^{-}}\}(0)$
of $K^{0}$, i.e., 
\begin{equation}
K^{0}\widetilde{q_{\gamma}^{\pm}}=\pm\sqrt{-1}\widetilde{q_{\gamma}^{\pm}}\mbox{ for }\gamma=1,2,3,\label{eq:Stroh eigenvalue problem}
\end{equation}
with $\widetilde{q_{\gamma}^{+}}$ being the complex conjugate of
$\widetilde{q_{\gamma}^{-}}$, or $\widetilde{q_{\gamma}^{+}}=\overline{\widetilde{q_{\gamma}^{-}}}$
at $y=0$.

According to the result in \cite{tanuma2007stroh}, the eigenvalue
problem for $K^{0}$ is \textit{degenerate} and there are generalized
eigenvectors. More precisely, let 
\[
\widetilde{q_{1}^{+}}=\left(\begin{array}{c}
\omega_{2}\\
-\omega_{1}\\
0\\
\sqrt{-1}\mu\omega_{2}\\
-\sqrt{-1}\mu\omega_{1}\\
0
\end{array}\right)(0),\mbox{ }\widetilde{q_{2}^{+}}=\left(\begin{array}{c}
\omega_{1}\\
\omega_{2}\\
\sqrt{-1}\\
-2\mu\omega_{1}\\
-2\mu\omega_{2}\\
2\sqrt{-1}
\end{array}\right)(0)
\]
and 
\[
\widetilde{q_{3}^{+}}=\left(\begin{array}{c}
0\\
0\\
-\frac{\lambda+3\mu}{\lambda+\mu}\\
-\frac{2\mu^{2}}{\lambda+\mu}\omega_{1}\\
-\frac{2\mu^{2}}{\lambda+\mu}\omega_{2}\\
-\sqrt{-1}\frac{2\mu(\lambda+2\mu)}{\lambda+\mu}
\end{array}\right)(0)
\]
such that 
\[
K^{0}\widetilde{q_{3}^{+}}-\sqrt{-1}\widetilde{q_{3}^{+}}=\widetilde{q_{2}^{+}}.
\]
and define 
\[
\widetilde{Q}:=\left(\widetilde{q_{1}^{+}},\widetilde{q_{2}^{+}},\widetilde{q_{3}^{+}},\widetilde{q_{1}^{-}},\widetilde{q_{2}^{-}},\widetilde{q_{3}^{-}}\right),
\]
which is a non-singular matrix giving the Jordan canonical form 
\[
\widetilde{Q}^{-1}K^{0}\widetilde{Q}=\left(\begin{array}{cccccc}
\sqrt{-1}\\
 & \sqrt{-1} & 1\\
 &  & \sqrt{-1}\\
 &  &  & -\sqrt{-1}\\
 &  &  &  & -\sqrt{-1} & 1\\
 &  &  &  &  & -\sqrt{-1}
\end{array}\right).
\]
Since we want to have a general form of solution of \eqref{eq:First order}
which is bounded for $z_{3}\in[0,\infty)$, we take $\zeta=\sqrt{-1}$.
Further we take linearly independent vectors $\sigma_{1}=\left(\begin{array}{c}
\omega_{2}\\
-\omega_{1}\\
0
\end{array}\right)$, $\sigma_{2}=\left(\begin{array}{c}
\omega_{1}\\
\omega_{2}\\
\sqrt{-1}
\end{array}\right)$ and $\sigma_{3}=\left(\begin{array}{c}
0\\
0\\
-\frac{\lambda+3\mu}{\lambda+\mu}(0)
\end{array}\right)$. Then, for any given $\mathbf{a}=(a_{1},a_{2},a_{3})$, there exists
constants $c_{\beta}\in\mathbb{C}$ ($\beta=1,2,3$) such that $\mathbf{a}=\sum_{s=1}^{3}c_{s}\sigma_{s}$.
Therefore, as in \cite[Lemma 1.6 and (2.66)]{tanuma2007stroh}, $V^{0}(z',z_{3})$
is given as 
\begin{eqnarray}
V^{0}(z',z_{3}) & = & e^{-z_{3}}\eta(z')\big(\sum_{s=1}^{3}c_{s}\sigma_{s}-\sqrt{-1}c_{3}\,\sigma_{2}\,z_{3}\big)\label{eq:V^0}\\
 & = & e^{-z_{3}}\eta(z')\,\big(\mathbf{a}-\sqrt{-1}c_{3}\,z_{3}\,\sigma_{2}\big),\nonumber 
\end{eqnarray}
which is a $C^{\infty}$-smooth solution of $L_{0}V^{0}=0$ with $V^{0}(z',0)=\eta(z')\,\mathbf{a}$.

Next, we solve 
\begin{equation}
L_{0}V^{1}+L_{1}V^{0}=0\mbox{ with }V^{1}(z',0)=0.\label{eq:V^1}
\end{equation}
Since 
\begin{eqnarray*}
L_{1} & = & \big(\sqrt{-1}\sum_{j,l=1}^{2}\dot{C}_{ijkl}(0)(\omega_{j}\dfrac{\partial}{\partial z_{l}}+\omega_{l}\dfrac{\partial}{\partial z_{j}})\\
 &  & +\sum_{j=1}^{2}\dot{C}_{ijk3}(0)\dfrac{\partial^{2}}{\partial z_{j}\partial z_{3}}+\sum_{l=1}^{2}\dot{C}_{i3kl}(0)\dfrac{\partial^{2}}{\partial z_{l}\partial z_{3}}\big)_{1\le i,k\le3},
\end{eqnarray*}
and 
\[
L_{1}V^{0}(z',z_{3})=e^{-z_{3}}\sum_{d=0}^{1}P_{0}^{d}(z')z_{3}^{d},
\]
where $P_{0}^{d}(z')$ are $C^{\infty}$-smooth vector-valued function
depending on $\partial_{z'}^{\beta}\eta(z')$ for multi-indices $|\beta|\leq1$
for $d=0,1$. It is worth mentioning the following observation. That
is for any $d\in\mathbb{N}$, we have by direct computation 
\[
L_{0}\left(z_{3}^{d}e^{-z_{3}}\right)=e^{-z_{3}}\left(z_{3}^{d-1}\,R_{1}^{d}+z_{3}^{d-2}R_{2}^{d}\right)
\]
with invertible matrices 
\begin{eqnarray*}
R_{1}^{d} & = & d\left[2\left\langle e_{3},e_{3}\right\rangle _{0}-\sqrt{-1}\left(\left\langle \omega,e_{3}\right\rangle _{0}+\left\langle e_{3},\omega\right\rangle _{0}\right)\right],\\
R_{2}^{d} & = & -d(d-1)\left\langle e_{3},e_{3}\right\rangle _{0}.
\end{eqnarray*}
Based on this we look for $V^{1}(z',z_{3})$ in the following form
\begin{eqnarray}
V^{1}(z',z_{3}) & = & e^{-z_{3}}\sum_{d=1}^{2}z_{3}^{d}P_{1}^{d}(z'),\label{eq:V^1-1}
\end{eqnarray}
where $P_{1}^{d}(z')\in\mathbb{C}^{3}$ are vector-valued functions
which will be determined later. By straightforward computation, we
can have 
\begin{equation}
L_{0}V^{1}=z_{3}\,e^{-z_{3}}\,R_{1}^{2}+e^{-z_{3}}\big\{ R_{2}^{2}P_{1}^{2}(z')+R_{1}^{1}P_{1}^{1}(z')\big\}.
\end{equation}
Then by equating the equation $L_{0}V^{1}=-L_{1}V^{0}$, we have 
\begin{align}
 & P_{0}^{1}(z')=-R_{1}^{2}P_{1}^{2}(z'),\label{eq:P_0 1 (z')}\\
 & P_{0}^{0}(z')=-R_{2}^{2}\,P_{1}^{2}(z')-R_{1}^{1}\,P_{1}^{1}(z').\label{eq:P_0 2 (z')}
\end{align}
In order to solve $P_{1}^{d}(z')$ explicitly for $d=1,2$, first,
we can invert the right hand side of (\ref{eq:P_0 1 (z')}) to find
$P_{1}^{2}(z')$ and plug it into (\ref{eq:P_0 2 (z')}) to know $P_{1}^{1}(z')$.
Thus we have (\ref{eq:V^1-1}).

Further for each $n\ge2$, we can express the solution $V^{n}(z',z_{3})$
of \eqref{eq:ODE systems} with $V^{n}(z',0)=0$ inductively as 
\begin{eqnarray*}
V^{n}(z) & = & \sum_{d=1}^{n+1}z_{3}^{d}P_{n}^{d}(z')e^{-z_{3}},
\end{eqnarray*}
where $P_{n}^{d}(z')$ are smooth vector-valued functions depending
on $\mu(0),\lambda(0),\eta(z')$ and supported in $\{|z'|<1\}$ for
$n=1,2,\cdots,\dfrac{m}{\rho}$. Finally, from \eqref{eq:Expansion 5}
and (\ref{eq:ODE systems}), we have 
\[
\mathcal{L}\Phi^{N}=L_{R}\mathbb{V}.
\]
Here note that the coefficients of $L_{R}$ are of the form $O(N^{2-m-\rho})$
multiplied with polynomials in $z_{3}$. Therefore, there exists $C=C(m)$
such that 
\[
\left|L_{R}\mathbb{V}\right|\leq CN^{2-m-\rho}\mathcal{P}(z_{3})e^{-z_{3}},
\]
where $\mathcal{P}(z_{3})$ is a polynomial in $z_{3}$ with non-negative,
which completes the proof. 
\end{proof}

\section{Proof of Theorem \ref{Main Thm}, (2)\label{Section 3}}

In this section, we prove item (2) of Theorem 1.1. Our ideas are initiated
from \cite{kang2002boundary,nakamura2001local}. Let $\zeta(y_{3})\in C^{\infty}([0,\infty))$
satisfy $0\leq\zeta\leq1$, $\zeta(y_{3})=1$ for $0\leq y_{3}\leq\dfrac{1}{2}$
and $\zeta(y_{3})=0$ for $y_{3}\geq1$ and put 
\[
\zeta_{N}(y_{3})=\zeta(\sqrt{N}y_{3}).
\]
Given $\epsilon>0$, choose large $N\in\mathbb{N}$, 
\[
\mbox{supp}(\eta^{N}\zeta_{N})\subset\Omega_{\epsilon}:=\{|x|\leq\epsilon\}.
\]

For $m\in\mathbb{N}$, recall that the regularity of $\partial\Omega$
is of $C^{m+2}$ class. For convenience, denote $\mathcal{C}^{m}=\mathcal{C}^{m,0}$,
$\lambda^{m}=\lambda^{m,0}$ and $\mu^{m}=\mu^{m,0}$, where $\mathcal{C}^{m,x}$,
$\lambda^{m,x}$ and $\mu^{m,x}$ were introduced in (\ref{eq:C_bx})
and (\ref{eq:Lame B}). Let $u^{N}=(u_{1}^{N},u_{2}^{N},u_{3}^{N})\in H^{1}(\Omega;\mathbb{C}^{3})$
be the solution to 
\begin{equation}
\begin{cases}
\sum_{j,k,l=1}^{3}\dfrac{\partial}{\partial x_{j}}(\dot{C}_{ijkl}\dfrac{\partial}{\partial x_{l}}u_{k}^{N})=0\,\,\,(1\le i\le3)\,\, & \mbox{ in }\Omega,\\
u^{N}=\phi^{N} & \mbox{ on }\partial\Omega,
\end{cases}\label{uN}
\end{equation}
\label{vN} and let $v^{N}=(v_{1}^{N},v_{2}^{N},v_{3}^{N})\in H^{1}(\Omega;\mathbb{C}^{3})$
be the solution to 
\begin{equation}
\begin{cases}
\sum_{j,k,l=1}^{3}\dfrac{\partial}{\partial x_{j}}(\dot{C}_{ijkl}^{m}\dfrac{\partial}{\partial x_{l}}v_{k}^{N})=0\,\,\,(1\le i\le3)\,\  & \mbox{ in }\Omega,\\
v^{N}=\phi^{N} & \mbox{ on }\partial\Omega.
\end{cases}
\end{equation}
Let $\zeta_{N}\,\Phi^{N}$ and $\zeta_{N}\,\Psi^{N}$ be approximate
solutions of $u^{N}$ and $v^{N}$ with $\zeta_{N}\,\Phi^{N}|_{\partial\Omega}=\zeta_{N}\,\Psi^{N}|_{\partial\Omega}=\phi^{N}$,
respectively. Likewise the construction in Section 2, we can express
$\Psi^{N}$ as 
\begin{equation}
\Psi^{N}(y)=e^{\sqrt{-1}Ny'\cdot\omega'}e^{-Ny_{3}}\left\{ \eta^{N}(y')\,\mathbf{a}+\sum_{n=1}^{\frac{m}{\rho}}N^{-n\rho}v_{n}^{m}(N^{\alpha}y)\right\} ,\label{eq:Psi^N}
\end{equation}
where $v_{n}^{m}(N^{\alpha}y)$ are polynomials in $Ny_{3}$ depending
on $\dot{C}_{ijkl}^{m}(0)$ and their coefficients are $C^{\infty}$-smooth
functions of $N^{\alpha}y$ supported in $\{|y'|<N^{\rho-1}\}$ for
$n=1,2,\cdots,\dfrac{m}{\rho}$.

Note that $\left\langle \Lambda_{\mathcal{C}^{m}}\phi^{N},\overline{\phi^{N}}\right\rangle $
is real and hence we have 
\[
\left\langle \Lambda_{\mathcal{C}^{m}}\phi^{N},\overline{\phi^{N}}\right\rangle =\left\langle \overline{\Lambda_{\mathcal{C}^{m}}\phi^{N}},\phi^{N}\right\rangle .
\]
By direct calculation, we have 
\begin{alignat*}{1}
 & \left\langle (\Lambda_{\mathcal{C}}-\Lambda_{\mathcal{C}^{m}})\phi^{N},\overline{\phi^{N}}\right\rangle \\
= & \int_{\Omega}\left[\lambda\mbox{div}\,u^{N}(\overline{\mbox{div}\,(\zeta^{N}\Psi^{N})})+2\mu\epsilon(u^{N}):\overline{\epsilon(\zeta^{N}\Psi^{N})}\right]dy\\
 & -\int_{\Omega}\left[\lambda^{m}\mbox{div}\,v^{N}(\overline{\mbox{div}\,(\zeta^{N}\Phi^{N})})+2\mu^{m}\epsilon(v^{N}):\overline{\epsilon(\zeta^{N}\Phi^{N})}\right]dy.
\end{alignat*}
Let 
\[
u^{N}=\Phi^{N}+f^{N}\mbox{ and }v^{N}=\Psi^{N}+g^{N}
\]
with 
\[
f^{N}|_{\partial\Omega}=g^{N}|_{\partial\Omega}=0.
\]
Then we have 
\begin{align*}
 & \left\langle (\Lambda_{\mathcal{C}}-\Lambda_{\mathcal{C}^{m}})\phi^{N},\overline{\phi^{N}}\right\rangle \\
= & \int_{\Omega}\left[\lambda\mbox{div}\,\Phi^{N}(\overline{\mbox{div}\,(\zeta^{N}\Psi^{N})})+2\mu\epsilon(\Phi^{N}):\overline{\epsilon(\zeta^{N}\Psi^{N})}\right]dy\\
 & -\int_{\Omega}\left[\lambda^{m}\mbox{div}\,\Psi^{N}(\overline{\mbox{div}\,(\zeta^{N}\Phi^{N})}+2\mu^{m}\epsilon(\Psi^{N}):\overline{\epsilon(\zeta^{N}\Phi^{N})}\right]dy\\
 & +\int_{\Omega}\left[\lambda\mbox{div}\,f^{N}(\overline{\mbox{div}\,(\zeta^{N}\Psi^{N})})+2\mu\epsilon(f^{N}):\overline{\epsilon(\zeta^{N}\Psi^{N})}\right]dy\\
 & -\int_{\Omega}\left[\lambda^{m}\mbox{div}\,g^{N}(\overline{\mbox{div}\,(\zeta^{N}\Phi^{N})})+2\mu^{m}\epsilon(g^{N}):\overline{\epsilon(\zeta^{N}\Phi^{N})}\right]dy\\
:= & I+II+III,
\end{align*}
where 
\begin{eqnarray*}
I & = & \int_{\Omega}\left[\lambda\mbox{div}\,\Phi^{N}(\overline{\mbox{div}\,(\zeta^{N}\Psi^{N})})+2\mu\epsilon(\Phi^{N}):\overline{\epsilon(\zeta^{N}\Psi^{N})}\right]dy\\
 &  & -\int_{\Omega}\left[\lambda^{m}\mbox{div}\,\Psi^{N}(\overline{\mbox{div}\,(\zeta^{N}\Phi^{N})})+2\mu^{m}\epsilon(\Psi^{N}):\overline{\epsilon(\zeta^{N}\Phi^{N})}\right]dy,\\
II & = & \int_{\Omega}\left[\lambda\mbox{div}\,f^{N}(\overline{\mbox{div}\,(\zeta^{N}\Psi^{N})})+2\mu\epsilon(f^{N}):\overline{\epsilon(\zeta^{N}\Psi^{N})}\right]dy,\\
III & = & -\int_{\Omega}\left[\lambda^{m}\mbox{div}\,g^{N}(\overline{\mbox{div}\,(\zeta^{N}\Phi^{N})})+2\mu^{m}\epsilon(g^{N}):\overline{\epsilon(\zeta^{N}\Phi^{N})}\right]dy.
\end{eqnarray*}
We will estimate $I$, $II$ and $III$ separately in the next subsections.

\subsection{Estimate of $I$}

Let 
\[
\Omega_{N}':=\left\{ y:|y_{1}|,|y_{2}|\leq N^{\rho-1},\mbox{ }\dfrac{1}{2\sqrt{N}}\leq y_{3}\leq\dfrac{1}{\sqrt{N}}\right\} \mbox{ and }D_{N}:=\Omega_{N}\backslash\Omega_{N}'.
\]
then we can rewrite $I$ as 
\begin{eqnarray*}
I & = & \int_{D_{N}}\left[(\lambda-\lambda^{m})\mbox{div}\,\Phi^{N}(\overline{\mbox{div}\,\Psi^{N}})+2(\mu-\mu^{m})\epsilon(\Phi^{N}):\overline{\epsilon(\Psi^{N})}\right]dy\\
 &  & +\int_{\Omega_{N}'}\left[\lambda\mbox{div}\,\Phi^{N}(\overline{\mbox{div}\,(\zeta^{N}\Psi^{N})})+2\mu\epsilon(\Phi^{N}):\overline{\epsilon(\zeta^{N}\Psi^{N})}\right]dy\\
 &  & -\int_{\Omega_{N}'}\left[\lambda^{m}\mbox{div}\,\Psi^{N}(\overline{\mbox{div}\,(\zeta^{N}\Phi^{N})})+2\mu^{m}\epsilon(\Psi^{N}):\overline{\epsilon(\zeta^{N}\Phi^{N})}\right]dy\\
 & := & I_{1}+I_{2},
\end{eqnarray*}
where 
\begin{eqnarray*}
I_{1} & = & \int_{D_{N}}(\lambda-\lambda^{m})\mbox{div}\,\Phi^{N}(\overline{\mbox{div}\,\Psi^{N}})+2(\mu-\mu^{m})\epsilon(\Phi^{N}):\overline{\epsilon(\Psi^{N})}dy,\\
I_{2} & = & \int_{\Omega_{N}'}\lambda\mbox{div}\,\Phi^{N}(\overline{\mbox{div}\,(\zeta^{N}\Psi^{N})})+2\mu\epsilon(\Phi^{N}):\overline{\epsilon(\zeta^{N}\Psi^{N})}dy,\\
 &  & -\int_{\Omega_{N}'}\lambda^{m}\mbox{div}\,\Psi^{N}(\overline{\mbox{div}\,(\zeta^{N}\Phi^{N})})-2\mu^{m}\epsilon(\Psi^{N}):\overline{\epsilon(\zeta^{N}\Phi^{N})}dy.
\end{eqnarray*}
Recall that $\mathbf{a}=(a_{1},a_{2},a_{3})\in\mathbb{C}^{3}$, then
for $I_{1}$, by (\ref{eq:Approximate Solution}), (\ref{eq:Psi^N}),
and direct calculation, we have 
\[
\mbox{div}\,\Phi^{N}=Ne^{\sqrt{-1}Ny'\cdot\omega'}e^{-Ny_{3}}\left(\sqrt{-1}\sum_{i=1}^{2}\omega_{i}a_{i}-a_{3}\right)\eta^{N}(y')+O(N^{1-\rho})e^{-c_{0}Ny_{3}},
\]
with some constant $c_{0}>0$. Hereafter $c_{0}$ denotes a general
constant which may differ time to time. Also for each $\epsilon_{ij}(\Phi^{N})$
of $\epsilon(\Phi^{N})=(\epsilon_{ij}(\Phi^{N}))$ we have 
\[
\begin{array}{ll}
\epsilon_{ij}(\Phi^{N}) & =\sqrt{-1}Ne^{\sqrt{-1}Ny'\cdot\omega'}e^{-Ny_{3}}\dfrac{a_{i}\omega_{j}+a_{j}\omega_{i}}{2}\eta^{N}(y')\\
 & \qquad\qquad+O(N^{1-\rho})e^{-c_{0}Ny_{3}},\\
\epsilon_{i3}(\Phi^{N}) & =Ne^{\sqrt{-1}Ny'\cdot\omega'}e^{-Ny_{3}}\dfrac{\sqrt{-1}a_{3}\omega_{i}-a_{i}}{2}\eta^{N}(y')+O(N^{1-\rho})e^{-c_{0}Ny_{3}},\\
\epsilon_{33}(\Phi^{N}) & =-Ne^{\sqrt{-1}Ny'\cdot\omega'}e^{-Ny_{3}}\eta^{N}(y')\,a_{3}+O(N^{1-\rho})e^{-c_{0}Ny_{3}}.
\end{array}
\]
Similarly, we have 
\[
\mbox{div}\,\Psi^{N}=Ne^{\sqrt{-1}Ny'\cdot\omega'}e^{-Ny_{3}}\left(\sqrt{-1}\sum_{i=1}^{2}\omega_{i}\,a_{i}-a_{3}\right)\eta^{N}(y')+O(N^{1-\rho})e^{-c_{0}Ny_{3}}
\]
and for $i,j=1,2$, we have 
\[
\begin{array}{ll}
\epsilon_{ij}(\Psi^{N}) & =\sqrt{-1}Ne^{\sqrt{-1}Ny'\cdot\omega'}e^{-Ny_{3}}\dfrac{a_{i}\omega_{j}+a_{j}\omega_{i}}{2}\eta^{N}(y')\\
 & \qquad\qquad+O(N^{1-\rho})e^{-c_{0}Ny_{3}},\\
\epsilon_{i3}(\Psi^{N}) & =Ne^{\sqrt{-1}Ny'\cdot\omega'}e^{-Ny_{3}}\dfrac{\sqrt{-1}a_{3}\omega_{i}-a_{i}}{2}\eta^{N}(y')+O(N^{1-\rho})e^{-c_{0}Ny_{3}},\\
\epsilon_{33}(\Psi^{N}) & =Ne^{\sqrt{-1}Ny'\cdot\omega'}e^{-Ny_{3}}\eta^{N}(y')+O(N^{1-\rho})e^{-c_{0}Ny_{3}}.
\end{array}
\]
Recall that 
\[
\epsilon(\Phi^{N}):\epsilon(\Psi^{N})=\sum_{1\leq i,j\leq3}\epsilon_{ij}(\Phi^{N})\epsilon_{ij}(\Psi^{N}),
\]
by straightforward calculation, then we have 
\begin{eqnarray*}
I_{1} & = & N^{2}\int_{0}^{\frac{1}{2\sqrt{N}}}\int_{|y'|\leq N^{\rho-1}}e^{-2Ny_{3}}\Big\{\eta^{N}(y')^{2}(\lambda-\lambda^{m})\\
 &  & \times\Big(\sqrt{-1}\sum_{i=1}^{2}\omega_{i}a_{i}-a_{3}\Big)^{2}+2\Big(\mu-\mu^{m}\Big)\eta^{N}(y')^{2}\\
 &  & \times\left[\sum_{i,j=1}^{2}\Big(\dfrac{a_{i}\omega_{j}+a_{j}\omega_{i}}{2}\Big)^{2}+2\sum_{i=1}^{2}\Big(\dfrac{\sqrt{-1}a_{3}\omega_{i}-a_{i}}{2}\Big)^{2}+a_{3}^{2}\right]\Big\} dy'\,dy_{3}\\
 &  & +O(N^{2-\rho})\int_{0}^{\frac{1}{2\sqrt{N}}}\int_{|y'|\le N^{\rho-1}}e^{-2c_{0}Ny_{3}}\big(|\lambda-\lambda^{m}|+|\mu-\mu^{m}|\big)\,dy'\,dy_{3}.
\end{eqnarray*}

For further argument we need the following lemma which was proved
in \cite{nakamura2001local}. 
\begin{lem}
\cite{nakamura2001local} For any $k\in\mathbb{N}$, $f(y)=f(y',y_{3})\in C^{k}$
around $y=0$, we have 
\begin{align}
 & \lim_{N\to\infty}N^{2+k}\int_{0}^{\frac{1}{2\sqrt{N}}}\int_{|y'|\leq N^{\rho-1}}\eta^{N}(y')^{2}e^{-2Ny_{3}}\left(f(y)-f^{k}(y)\right)dy'dy_{3}\nonumber \\
 & \qquad\qquad=\dfrac{1}{2^{k+1}}\dfrac{\partial^{k}f}{\partial y_{3}^{k}}(0),\label{eq:Main Limit}
\end{align}
where $f^{k}(y)=\sum_{n=0}^{k-1}\dfrac{1}{n!}\dfrac{\partial^{n}}{\partial y_{3}^{n}}f(y',0)y_{3}^{n}$.
Also for each $d\in\mathbb{N}$, we have 
\begin{align}
 & \lim_{N\to\infty}N^{2-\rho+k}\int_{0}^{\frac{1}{2\sqrt{N}}}\int_{|y'|\leq N^{\rho-1}}\psi(\sqrt{N}y')(Ny_{3})^{d}e^{-2Ny_{3}}\Big|f(y)-f^{k}(y)\Big|\,dy'dy_{3}\nonumber \\
 & \qquad\qquad\qquad=0,\label{eq:Not Important Limit}
\end{align}
where $\psi=\psi(\sqrt{N}y')$ is a $C^{\infty}$ function supported
in $\{|y'|<\dfrac{1}{\sqrt{N}}\}$. 
\end{lem}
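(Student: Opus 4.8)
The plan is to derive both statements from one elementary ingredient --- Taylor's formula with integral remainder in the transversal variable --- followed by the anisotropic rescaling in which the tangential variable is dilated by the cutoff scale and $y_3$ is replaced by $s=2Ny_3$; this turns the two integrations into $\int\eta^2\,dy'=1$ and a one-dimensional Gamma integral. Since $f\in C^k$ near the origin, for $|y|$ small one has the exact identity
\[
f(y',y_3)-f^k(y',y_3)=\frac{y_3^k}{(k-1)!}\int_0^1(1-t)^{k-1}\frac{\partial^k f}{\partial y_3^k}(y',ty_3)\,dt,
\]
which already displays $f-f^k$ as $\dfrac{y_3^k}{k!}\dfrac{\partial^k f}{\partial y_3^k}(0)$ plus a genuine remainder; everything else is bookkeeping of powers of $N$.

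To prove \eqref{eq:Main Limit} I would insert this identity and split $\partial_{y_3}^k f(y',ty_3)=\partial_{y_3}^k f(0)+r_N(y',ty_3)$. The frozen-coefficient part contributes $\dfrac{1}{k!}\dfrac{\partial^k f}{\partial y_3^k}(0)$ times $N^{2+k}\iint\eta^N(y')^2e^{-2Ny_3}y_3^k\,dy'\,dy_3$; after dilating the tangential slab so that $\eta^N$ becomes $\eta$ (whence $\int\eta^2\,dy'=1$) and substituting $s=2Ny_3$ --- whose Jacobian, once the powers of $N$ are absorbed, leaves the factor $2^{-(k+1)}$ --- this reduces to $\dfrac{1}{2^{k+1}}\int_0^{\sqrt{N}}s^k e^{-s}\,ds$, which tends to $\dfrac{k!}{2^{k+1}}$, so the frozen part converges to $\dfrac{1}{2^{k+1}}\dfrac{\partial^k f}{\partial y_3^k}(0)$, exactly the asserted value. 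The remaining part is bounded in absolute value by the supremum of $|r_N|$ over the region $|y'|\le N^{\rho-1}$, $0\le y_3\le\dfrac{1}{2\sqrt{N}}$ times the same (bounded) integral; since that region shrinks to the origin and $\partial_{y_3}^k f$ is continuous there, the supremum is $o(1)$, and \eqref{eq:Main Limit} follows.

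For \eqref{eq:Not Important Limit} only a crude bound is needed: the same identity gives $|f(y)-f^k(y)|\le C\,y_3^k$ on a fixed compact neighbourhood of the origin, into which the region of integration eventually falls, so the left-hand side is dominated by
\[
C\,N^{2-\rho+k}\Big(\int|\psi(\sqrt{N}y')|\,dy'\Big)\Big(\int_0^{\infty}(Ny_3)^{d}y_3^{k}e^{-2Ny_3}\,dy_3\Big).
\]
The first factor is $O(N^{-1})$ since $y'\mapsto\psi(\sqrt{N}y')$ is supported in a ball of radius $N^{-1/2}$, and the substitution $s=2Ny_3$ shows the second is $O(N^{-k-1})$; hence the product is $O(N^{2-\rho+k}\cdot N^{-1}\cdot N^{-k-1})=O(N^{-\rho})\to0$. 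The extra factor $N^{-\rho}$ in the prefactor --- as against the critical $N^{2+k}$ of \eqref{eq:Main Limit} --- is precisely what forces this term to vanish, so no cancellation has to be exploited.

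The step I expect to need the most care is keeping the remainder estimate for \eqref{eq:Main Limit} uniform over the shrinking tangential slab while assuming only $f\in C^k$ --- no H\"older modulus and no extra derivative available. This is handled by lumping the tangential and transversal variation of $\partial_{y_3}^k f$ into the single quantity $r_N$ and invoking uniform continuity of $\partial_{y_3}^k f$ on a fixed compact neighbourhood of the origin; a secondary point is correctly tracking the powers of $N$ through the rescaling, in particular the interplay between the tangential cutoff scale and the $1/\sqrt{N}$ scale of $\zeta_N$. Everything else reduces to elementary one-dimensional integrals and the identity $\int_0^\infty s^k e^{-s}\,ds=k!$.
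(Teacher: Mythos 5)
The paper does not prove this lemma itself---it cites \cite{nakamura2001local}---so there is nothing in the source to compare against. Your overall strategy (Taylor's formula with integral remainder in $y_3$, freezing $\partial_{y_3}^k f$ at the origin, uniform continuity on the shrinking box, and a Gamma-integral after $s=2Ny_3$) is exactly the standard route to such a localization formula, and your treatment of \eqref{eq:Not Important Limit} by the crude bound $|f-f^k|\le Cy_3^k$ plus power counting is fine.

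There is, however, a gap in the power counting for \eqref{eq:Main Limit}. You assert that "after dilating the tangential slab so that $\eta^N$ becomes $\eta$ (whence $\int\eta^2\,dy'=1$) \dots once the powers of $N$ are absorbed, [the Jacobian] leaves the factor $2^{-(k+1)}$." But with the paper's definition $\eta^N(y')=\eta(N^{1-\rho}y')$, the tangential change of variables $z'=N^{1-\rho}y'$ has Jacobian $N^{2(\rho-1)}$, so
\[
\int_{|y'|\le N^{\rho-1}}\eta^N(y')^2\,dy'=N^{2(\rho-1)}\int\eta(z')^2\,dz'=N^{2(\rho-1)},
\]
not $1$. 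Combined with the $y_3$-substitution, the frozen term picks up the net factor
\[
N^{2+k}\cdot N^{2(\rho-1)}\cdot\frac{1}{(2N)^{k+1}}\int_0^{\sqrt{N}}s^k e^{-s}\,ds\;\sim\;N^{2\rho-1}\cdot\frac{k!}{2^{k+1}},
\]
which equals $\frac{k!}{2^{k+1}}$ only when $\rho=\tfrac12$. In the paper, $\rho=1/\widetilde\rho$ is taken small, so under the literal definitions the frozen term tends to $0$, not to $\frac{1}{2^{k+1}}\partial_{y_3}^k f(0)$. Your argument is therefore only complete under the additional (unstated) hypothesis $\int(\eta^N)^2\,dy'=1$ --- equivalently $\rho=1/2$, or a normalized cutoff $\eta^N(y')=N^{1-\rho}\eta(N^{1-\rho}y')$. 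You should make explicit which normalization is being used and verify that it is consistent with how the lemma is invoked; simply saying the powers of $N$ "are absorbed" hides precisely the factor $N^{2\rho-1}$ that governs whether the stated limit is nonzero.
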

By using (\ref{eq:Main Limit}) and (\ref{eq:Not Important Limit}),
we can see that 
\begin{align}
 & \lim_{N\to\infty}N^{m}I_{1}\label{eq:I_1}\\
= & \dfrac{1}{2^{m+1}}\dfrac{\partial^{m}\lambda}{\partial y_{3}^{m}}(0)\left(\sqrt{-1}\sum_{i=1}^{2}\omega_{i}a_{i}-a_{3}\right)^{2}\nonumber \\
 & +\dfrac{1}{2^{m}}\dfrac{\partial^{m}\mu}{\partial y_{3}^{m}}(0)\left[\sum_{i,j=1}^{2}\left(\dfrac{a_{i}\omega_{j}+a_{j}\omega_{i}}{2}\right)^{2}+2\left(\dfrac{\sqrt{-1}a_{3}\omega_{i}-a_{i}}{2}\right)^{2}+1\right].\nonumber 
\end{align}
For $I_{2}$, via (\ref{eq:Approximate Solution}) and (\ref{eq:Psi^N}),
we have $|\Phi^{N}|+|\Psi^{N}|\leq\exp(-c_{1}Ny_{3})$ for some constant
$c_{1}>0$ and 
\begin{equation}
|I_{2}|\le c_{0}\exp\left(-c_{1}\dfrac{N^{\frac{1}{2}}}{2}\right).\label{eq:I_2}
\end{equation}
Combining (\ref{eq:I_1}) and (\ref{eq:I_2}), we have 
\begin{align*}
 & \lim_{N\to\infty}N^{m}I\\
= & \dfrac{1}{2^{m+1}}\dfrac{\partial^{m}\lambda}{\partial y_{3}^{m}}(0)\left(\sqrt{-1}\sum_{i=1}^{2}\omega_{i}a_{i}-a_{3}\right)^{2}\\
 & +\dfrac{1}{2^{m}}\dfrac{\partial^{m}\mu}{\partial y_{3}^{m}}(0)\left[\sum_{i,j=1}^{2}\left(\dfrac{a_{i}\omega_{j}+a_{j}\omega_{i}}{2}\right)^{2}+2\left(\dfrac{\sqrt{-1}a_{3}\omega_{i}-a_{i}}{2}\right)^{2}+a_{3}^{2}\right].
\end{align*}

\subsection{Estimates of $II$ and $III$}

In this section, we will prove 
\[
\lim_{N\to\infty}N^{m}II=\lim_{N\to\infty}N^{m}III=0.
\]
Since we can use the same method to prove $II$ and $III$, we only
prove the case 
\begin{equation}
\lim_{N\to\infty}N^{m}II=0.\label{eq:II}
\end{equation}
By direct calculation, 
\begin{eqnarray*}
II & = & \int_{\Omega}\left[\lambda\mbox{div}\,f^{N}(\overline{\mbox{div}\,(\zeta^{N}\Psi^{N})})+2\mu\epsilon(f^{N}):\overline{\epsilon(\zeta^{N}\Psi^{N})}\right]dy\\
 & = & \int_{D_{N}}\left[\lambda\mbox{div}\,f^{N}(\overline{\mbox{div}\,(\Psi^{N})})+2\mu\epsilon(f^{N}):\overline{\epsilon(\Psi^{N})}\right]dy\\
 &  & +\int_{\Omega_{N}'}\left[\lambda\mbox{div}\,f^{N}(\overline{\mbox{div}\,(\zeta^{N}\Psi^{N})})+2\mu\epsilon(f^{N}):\overline{\epsilon(\zeta^{N}\Psi^{N})}\right]dy\\
 & := & II_{1}+II_{2},
\end{eqnarray*}
where 
\begin{eqnarray*}
II_{1} & = & \int_{D_{N}}\left[\lambda\mbox{div}\,f^{N}(\overline{\mbox{div}\,(\Psi^{N})})+2\mu\epsilon(f^{N}):\overline{\epsilon(\Psi^{N})}\right]dy,\\
II_{2} & = & \int_{\Omega_{N}'}\left[\lambda\mbox{div}\,f^{N}(\overline{\mbox{div}\,(\zeta^{N}\Psi^{N})})+2\mu\epsilon(f^{N}):\overline{\epsilon(\zeta^{N}\Psi^{N})}\right]dy.
\end{eqnarray*}
From (\ref{eq:Approximate Solution}) (\ref{eq:Decay Estimate for approx. Sol})
and (\ref{eq:Psi^N}), it is not hard to see 
\[
|II_{2}|=O(e^{-\frac{\sqrt{N}}{2}})\mbox{ as }N\to\infty.
\]
Hence it remains to show that 
\[
\lim_{N\to\infty}N^{m}II_{1}=0
\]
with 
\[
II_{1}=II_{3}+II_{4},
\]
where 
\begin{eqnarray*}
II_{3} & = & \int_{D_{N}}\left[\lambda\mbox{div}\,f^{N}(\overline{\mbox{div}\,(\Phi^{N})})+2\mu\epsilon(f^{N}):\overline{\epsilon(\Phi^{N})}\right]dy,\\
II_{4} & = & \int_{D_{N}}\left[\lambda\mbox{div}\,f^{N}(\overline{\mbox{div}\,(\Psi^{N}-\Phi^{N})})+2\mu\epsilon(f^{N}):\overline{\epsilon(\Psi^{N}-\Phi^{N})}\right].
\end{eqnarray*}
Note that $f^{N}\in H_{0}^{1}(\Omega;\mathbb{R}^{3})$ satisfies $f^{N}=u^{N}-\Phi^{N}$
and 
\begin{equation}
\mathcal{L}f^{N}=-\mathcal{L}\Phi^{N}\mbox{ in }\Omega.\label{eq:f^N equation}
\end{equation}
By using the standard elliptic regularity theory, we have 
\[
\|f^{N}\|_{H_{0}^{1}(\Omega)}\leq C\|\mathcal{L}\Phi^{N}\|_{H^{-1}(\Omega)}\leq C\left\Vert (\sum_{j,k,l=1}^{3}\dot{C}_{ijkl}\dfrac{\partial}{\partial x_{l}}\Phi_{k}^{N})_{i=1}^{3}\right\Vert _{L^{2}(\Omega)}
\]
for some constant $C>0$. By straightforward computation and (\ref{eq:Approximate Solution}),
we have the following lemma.
\begin{lem}
Let $k,l=1,2,3$. For each $b\in\mathbb{N}\cup\{0\}$, there exists
a constant $C_{b}>0$ such that 
\begin{equation}
\left\Vert y_{3}^{b}\dfrac{\partial}{\partial y_{l}}\Phi_{k}^{N}\right\Vert _{L^{2}(\Omega_{N})}\leq C_{b}N^{-\frac{1}{2}+\rho-b}.\label{eq:L^2 for Phi^N}
\end{equation}

\end{lem}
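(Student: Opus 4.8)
The plan is to estimate the $L^2(\Omega_N)$-norm of $y_3^b\,\partial_{y_l}\Phi_k^N$ directly from the explicit form \eqref{eq:Approximate Solution} of $\Phi^N$, exploiting the exponential decay $e^{-Ny_3}$ in $y_3$ and the controlled support in $y'$. Recall that
\[
\Phi^N(y)=e^{\sqrt{-1}Ny'\cdot\omega'}e^{-Ny_3}\Big\{\eta^N(y')\,\mathbf{a}+\sum_{n=1}^{m/\rho}N^{-n\rho}v_n(N^\alpha y)\Big\},
\]
so each differentiation in $y_l$ produces at most a factor $N$ (from $\partial_{y_3}$ of $e^{-Ny_3}$, or from $\sqrt{-1}N\omega_l$, or from scaled derivatives of the $v_n$), while each factor $y_3^b$ can be absorbed into $N^{-b}e^{-Ny_3}$ times a bounded polynomial in $Ny_3$, since $\sup_{t\ge0}t^b e^{-t}<\infty$. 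Concretely, I would write $\partial_{y_l}\Phi_k^N$ as $e^{\sqrt{-1}Ny'\cdot\omega'}e^{-Ny_3}$ times a sum of terms of the form $N\cdot(\text{bounded function of }N^{1-\rho}y')\cdot(\text{polynomial in }Ny_3)$, plus lower-order terms; the leading contribution is the $n=0$ term, where $\partial_{y_l}$ hitting $e^{-Ny_3}$ or the phase gives the factor $N$, and $\partial_{y_l}$ hitting $\eta^N(y')=\eta(N^{1-\rho}y')$ only gives $N^{1-\rho}$, which is smaller.

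The next step is to carry out the spatial integration. Writing $dy=dy'\,dy_3$ over $\Omega_N=\{|y_1|,|y_2|\le N^{\rho-1},\ 0\le y_3\le N^{-1/2}\}$, the $y'$-integral contributes $|\{|y'|\le N^{\rho-1}\}|=O(N^{2(\rho-1)})$ since all $y'$-dependent factors are bounded uniformly (here one uses $\|\eta\|_\infty\le 1$ and that the rescaled coefficients of $v_n$ are bounded with support in $\{|y'|<N^{\rho-1}\}$). For the $y_3$-integral, after extracting $N^{-b}$ from $y_3^b$ one is left with $\int_0^{N^{-1/2}}(\text{poly in }Ny_3)\,e^{-2Ny_3}\,dy_3$, and the substitution $t=Ny_3$ turns this into $N^{-1}\int_0^{\sqrt N}(\text{poly in }t)e^{-2t}\,dt\le C N^{-1}$. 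Multiplying: $N^2$ (from the squared first-order factor) $\times\ N^{-2b}$ (from $y_3^{2b}$) $\times\ N^{2(\rho-1)}$ (from the $y'$-measure) $\times\ N^{-1}$ (from the $y_3$-integral) $=C N^{1-2b+2\rho-2}=CN^{-1+2\rho-2b}$, and taking square roots gives $\|y_3^b\,\partial_{y_l}\Phi_k^N\|_{L^2(\Omega_N)}\le C_b N^{-1/2+\rho-b}$, which is exactly \eqref{eq:L^2 for Phi^N}. The sum over $n=1,\dots,m/\rho$ contributes only terms with an extra factor $N^{-n\rho}$ and possibly one fewer power of $N$ per derivative, hence is dominated by the $n=0$ term.

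I expect the only real bookkeeping obstacle to be verifying uniformly in $N$ that all the $y'$-dependent factors — the derivatives $\partial_{z'}^\beta\eta$ and the $C^\infty$ coefficients of the polynomials $v_n(N^\alpha y)$ — are bounded independently of $N$ with support inside $\{|y'|<N^{\rho-1}\}$; this follows from the construction in the proof of the preceding lemma, where these coefficients depend only on $\lambda(0),\mu(0),\eta$ and finitely many of its derivatives, so the constant $C_b$ depends on $m$ and $b$ but not on $N$. A minor point is to make sure that when $\partial_{y_l}$ with $l\in\{1,2\}$ falls on the slowly-varying factor $\eta^N$ or on a coefficient of $v_n$, the resulting power $N^{1-\rho}$ is indeed no larger than $N$ (true since $\rho>0$), so such terms are harmless and do not improve the bound, which is consistent with the stated estimate being sharp in its $N$-power. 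Assembling these observations yields the claimed inequality.
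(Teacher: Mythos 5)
Your computation is correct and is exactly the "straightforward computation" the paper alludes to without writing out: pull one factor of $N$ from each $y_l$-derivative of the phase $e^{\sqrt{-1}Ny'\cdot\omega'}$, of $e^{-Ny_3}$, or of the polynomials in $Ny_3$ (derivatives in $y'$ hitting $\eta^N$ or the coefficients of $v_n$ cost only $N^{1-\rho}\le N$, and the $n\ge 1$ terms carry an extra $N^{-n\rho}$, so neither worsens the bound); absorb $y_3^{2b}$ as $N^{-2b}(Ny_3)^{2b}$; integrate the $y'$-box to get $O(N^{2(\rho-1)})$ and the $y_3$-integral via $t=Ny_3$ to get $O(N^{-1})$; multiply $N^2\cdot N^{-2b}\cdot N^{2(\rho-1)}\cdot N^{-1}=N^{-1+2\rho-2b}$ and take a square root. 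This matches the paper's (omitted) argument and the stated bound, so no further comment is needed.
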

By taking $b=0$, (\ref{eq:L^2 for Phi^N}) will imply that 
\[
\|f^{N}\|_{H_{0}^{1}(\Omega)}\leq CN^{-\frac{1}{2}+\rho}.
\]
Now, $\partial D_{N}=\Gamma_{1}\cup\Gamma_{2}$, where 
\begin{eqnarray*}
\Gamma_{1} & := & \left\{ y:|y_{1}|=N^{\rho-1}\mbox{ or }|y_{2}|=N^{\rho-1},\mbox{ }0\leq y_{3}\leq\dfrac{1}{2\sqrt{N}}\right\} ,\\
\Gamma_{2} & := & \left\{ y:|y_{1}|,|y_{2}|\leq N^{\rho-1},\mbox{ }y_{3}=\dfrac{1}{2\sqrt{N}}\right\} .
\end{eqnarray*}
For $k,l=1,2,3$, it is easy to see that 
\[
\dfrac{\partial}{\partial y_{l}}\Phi_{k}^{N}(y)=0\mbox{ on }\Gamma_{1}\mbox{ and }\dfrac{\partial}{\partial y_{l}}\Phi_{k}^{N}(y)=O(e^{-\frac{1}{2}N^{\frac{1}{2}}})\mbox{ on }\Gamma_{2}\mbox{ as }N\to\infty.
\]
Integration by parts yields that 
\[
II_{3}=-\sum_{i=1}^{3}\int_{D_{N}}f_{i}^{N}(\mathcal{L}\Phi^{N})_{i}dy+O(e^{-\frac{1}{2}N^{\frac{1}{2}}})\mbox{ as }N\to\infty.
\]
Thus, for $i=1,2,3$, by using the Hardy's inequality for $f_{i}^{N}\in H_{0}^{1}(\Omega)$
which was also used in \cite{brown2001recovering,kang2002boundary,nakamura2001local,robertson1997boundary},
we obtain 
\begin{eqnarray}
\int_{D_{N}}f_{i}^{N}(\mathcal{L}\Phi^{N})_{i}dy & \leq & \|y_{3}(\mathcal{L}\Phi^{N})_{i}\|_{L^{2}(D_{N})}\|y_{3}^{-1}f_{i}^{N}\|_{L^{2}(D_{N})}\label{eq:Hardy's inequality}\\
 & \leq & C\|y_{3}(\mathcal{L}\Phi^{N})_{i}\|_{L^{2}(D_{N})}\|f_{i}^{N}\|_{H^{1}(D_{N})}\nonumber \\
 & \leq & C\|y_{3}(\mathcal{L}\Phi^{N})_{i}\|_{L^{2}(D_{N})}N^{-\frac{1}{2}+\rho}.\nonumber 
\end{eqnarray}
By (\ref{eq:Decay Estimate for approx. Sol}), we can see that 
\begin{eqnarray}
\|y_{3}(\mathcal{L}\Phi^{N})_{i}\|_{L^{2}(D_{N})} & \leq & CN^{2-m-\rho}\|y_{3}\mathcal{P}(Ny_{3})e^{-Ny_{3}}\|_{L^{2}(D_{N})}\label{eq:weighted estimates}\\
 & \leq & CN^{-m-1}.\nonumber 
\end{eqnarray}
By (\ref{eq:Hardy's inequality}) and (\ref{eq:weighted estimates}),
we get 
\[
II_{3}=O(N^{-m+\rho-3/2})\mbox{ as }N\to\infty,
\]
which implies 
\[
\lim_{N\to\infty}N^{m}II_{3}=0.
\]

Finally, we need to show that 
\begin{equation}
\lim_{N\to\infty}N^{m}II_{4}=0.\label{eq:II_4}
\end{equation}
Notice that for $i=1,2,3$, 
\[
\Phi_{i}^{N}-\Psi_{i}^{N}=0\mbox{ on }\Gamma_{1}\mbox{ and }\Phi_{i}^{N}-\Psi_{i}^{N}=O(e^{-\frac{1}{2}N^{\frac{1}{2}}})\mbox{ as }N\to\infty.
\]
By using the integration by parts and (\ref{eq:f^N equation}), we
have 
\begin{eqnarray*}
II_{4} & = & -\sum_{i=1}^{3}\int_{D_{N}}(\mathcal{L}f^{N})_{i}(\Phi_{i}^{N}-\Psi_{i}^{N})dy+O(e^{-\frac{1}{2}N^{\frac{1}{2}}})\\
 & = & \sum_{i=1}^{3}\int_{D_{N}}(\mathcal{L}\Phi^{N})_{i}(\Phi_{i}^{N}-\Psi_{i}^{N})dy+O(e^{-\frac{1}{2}N^{\frac{1}{2}}}).
\end{eqnarray*}
We can use the same arguments for $II_{3}$ to show (\ref{eq:II_4}),
which finishes the proof of Theorem 1.1, (2).

\section{Non-flat boundary case\label{Section 4}}

In this section we will consider the boundary determination for the
non-flat boundary case. By using the boundary normal coordinates to
flatten $\partial\Omega$, we will show the necessary change we need
for the non-flat boundary case based on the boundary determination
argument we gave for the flat boundary case. Similar argument was
given in \cite[Section 3]{nakamura2017reconstruction} for the isotropic
elasticity system.

Given any boundary point $x_{0}\in\partial\Omega$, for all $x\in\Omega$
near $x_{0}\in\partial\Omega$, let $y=F(x):\mathbb{R}^{3}\to\mathbb{R}^{3}$
be a $C^{1}$-diffeomorphism which induces the boundary normal coordinates
$y=(y',y_{3})$ such that $F(x_{0})=0$ and $\nabla F(x_{0})=I_{3}$
(a $3\times3$ identity matrix). Let us define the Jacobian matrix
$J:=\nabla F=\left(\dfrac{\partial y_{a}}{\partial x_{r}}\right)_{a,r=1}^{3}$
and denote $G=JJ^{T}=(G_{ai})$, where $J^{T}$ is the transpose of
$J$ and $G(x_{0})=I_{3}$. In addition, near $x_{0}\in\partial\Omega$,

\[
g_{ai}(x)=\sum_{r=1}^{3}\dfrac{\partial x^{a}}{\partial x_{r}}(x)\dfrac{\partial x^{i}}{\partial x_{r}}(x)
\]
satisfying 
\[
g_{33}=1,\mbox{ }g_{a3}=g_{3a}=0\mbox{ for }a=1,2.
\]
Now, we have the following push-forward relations of the elastic tensor
$\dot{\mathcal{C}}$ by 
\begin{equation}
\widetilde{\mathcal{C}}:=F_{*}\dot{\mathcal{C}}=J\dot{\mathcal{C}}J^{T}|_{x=F^{-1}(y)},\label{eq:transformed elastic tensor}
\end{equation}
or componentwisely, $\widetilde{\mathcal{C}}=(\widetilde{\mathcal{C}}_{iqkp})_{1\leq i,q,k,p\leq3}$
with 
\[
\widetilde{\mathcal{C}}_{iqkp}(y)=\left.\left\{ \sum_{j,l=1}^{3}\dot{C}_{ijkl}(x)\dfrac{\partial y_{p}}{\partial x_{l}}\dfrac{\partial y_{q}}{\partial x_{j}}\right\} \right|_{x=F^{-1}(y)}.
\]

It is easy to check that under such localized boundary normal coordinates,
the isotropic elastic equation (\ref{eq:Elasticity}) will become
\begin{equation}
\begin{cases}
(\widetilde{\mathcal{L}}u)_{i}:=\sum_{q,k,p=1}^{3}\dfrac{\partial}{\partial y_{q}}(\widetilde{\mathcal{C}}_{iqkp}\dfrac{\partial}{\partial y_{p}}\widetilde{u}_{k})=0 & \mbox{ in }\{y_{3}>0\},\mbox{ for }i=1,2,3,\\
\widetilde{u}=\widetilde{f} & \mbox{ on }\{y_{3}=0\},
\end{cases}\label{eq:transformed elasticity system}
\end{equation}
where $\widetilde{u}=(F^{-1})^{*}u:=u\circ F^{-1}$ and $\widetilde{f}=f\circ F^{-1}$.
Similar as in Section 2, we can find an approximate solution $\widetilde{\Phi}^{N}(y)$
of \ref{eq:transformation rule 2} with the localized boundary data
$\widetilde{\Phi}^{N}(y',0)=\phi^{N}(y'):=\eta^{N}(y')\mathbf{a}$,
where $\eta^{N}(y')\mathbf{a}\in\mathbb{C}^{3}$ was given by (\ref{eq:boundary test function})
with arbitrary $\mathbf{a}\in\mathbb{C}^{3}$.

As in \cite[Section 3]{nakamura2017reconstruction}, by denoting $\zeta:=(\zeta',\zeta_{3})$,
we can define 
\begin{align*}
 & \widetilde{T}(y,\zeta'):=\left(\widetilde{C}_{i3k3}(y)\right)_{1\leq i,k\leq3},\\
 & \widetilde{R}(y,\zeta'):=\left(\sum_{p=1}^{2}\widetilde{C}_{ipk3}(y)\zeta_{j}\right)_{1\leq i,k\leq3},\\
 & \widetilde{Q}(y,\zeta'):=\left(\sum_{p,q=1}^{2}\widetilde{C}_{ipkq}(y)\zeta_{j}\zeta_{l}\right)_{1\leq i,k\leq3}.
\end{align*}
Likewise Section 2, we need to find a solution of the second order
ordinary differential system with constant matrix variables 
\begin{equation}
\begin{cases}
\widetilde{T}(0)D_{3}^{2}U_{0}+\left(\widetilde{R}(0)+\widetilde{R}^{T}(0)\right)D_{3}U_{0}+\widetilde{Q}(0)U_{0}=0,\\
V^{0}|_{x^{3}=0}=\phi^{N}(y').
\end{cases}\label{eq:tranformed 0-th ODE}
\end{equation}
For that repeat the argument given in Section 2 and need to consider
the following eigenvalue problem 
\begin{equation}
\det\left[\widetilde{T}(0)\Sigma^{2}+\left(\widetilde{R}(0)+\widetilde{R}^{T}(0)\right)\Sigma+\widetilde{Q}(0)\right]=0,\label{eq:eigenvalue problem 2}
\end{equation}
which is similar to \ref{eq:DET2}.

By the transformation rule of tensor, we have 
\begin{equation}
(\sum_{p,q=1}^{3}\widetilde{C}_{iqkp}\zeta_{q}\zeta_{p})_{i,k=1}^{3}=J(\sum_{j,l=1}^{3}\dot{C}_{ijkl}\xi_{j}\xi_{l})_{i,k=1}^{3}J^{T}\label{eq:tensor transformed rule}
\end{equation}
for any $x$ near $x_{0}\in\partial\Omega$ (or for any $y$ near
$0\in\partial\widetilde{\Omega}$, where $\widetilde{\Omega}=F(\Omega)$).
In addition, for any $x\in\partial\Omega$ near $x_{0}$, we can choose
a unit vector $\nu(x)=(\nu_{1},\nu_{2,}\nu_{3})\in\mathbb{R}^{3}$
such that for any $\xi=(\xi_{1},\xi_{2},\xi_{3})\in\mathbb{R}^{3}$
can be represented as $\xi(x)=q\nu(x)+\omega(x,\xi)$ for some $q\in\mathbb{R}$
and $\nu\perp\omega$ and we define 
\begin{align*}
 & \dot{T}:=\left(\sum_{j,l=1}^{3}\dot{C}_{ijkl}\nu_{j}\nu_{l}\right)_{1\leq i,k\leq3},\\
 & \dot{R}:=\left(\sum_{j,l=1}^{3}\dot{C}_{ijkl}\nu_{j}\omega_{l}\right)_{1\leq i,k\leq3},\\
 & \dot{Q}:=\left(\sum_{j,l=1}^{3}\dot{C}_{ijkl}\omega_{j}\omega_{l}\right)_{1\leq i,k\leq3}.
\end{align*}
By \ref{eq:tensor transformed rule}, we also have the following relations
\begin{equation}
\widetilde{T}=J\dot{T}J^{T},\mbox{ }\widetilde{R}=J\dot{R}J^{T}\mbox{ and }\widetilde{Q}=J\dot{Q}J^{T}\label{eq:transformation rule 2}
\end{equation}
in a small neighborhood of $x_{0}\in\partial\Omega$. For solving
the eigenvalue problem \ref{eq:eigenvalue problem 2}, use the relation
\ref{eq:transformation rule 2} and $J$ is an invertible Jacobian
matrix, then it is equivalent to solve 
\begin{equation}
\det\left[\dot{T}\Sigma^{2}+\left(\dot{R}+\dot{R}^{T}\right)\Sigma+\dot{Q}\right]_{x=x_{0}}=0.\label{eq:eigenvalue problem 3}
\end{equation}
Since $\dot{C}_{ijkl}$ is isotropic, $\dot{T}$, $\dot{R}$ and $\dot{Q}$
will not change the forms if we rotate the Cartesian coordinates associated
to this $\xi(x)$, therefore, for any fixed $x$, we may assume $\xi(x)=(\xi',\xi_{3})(x)$,
$\nu(x)=(0,0,1)$ and $\omega(x,\xi)=(\xi',0)$.

In addition, we can construct an approximate solution in terms of
this Cartesian coordinates as we did in Section 2. Hence, we can give
an explicit reconstruction formulae for the Lamé moduli $\lambda(x)$,
$\mu(x)$ and their derivatives from the localized DN map at any $x_{0}\in\partial\Omega$
with $C^{m+2}$-smooth boundary. To be more precise, we will give
the reconstruction formulae to identify the Lamé moduli and their
first order derivatives at the boundary for the non-flat boundary
case in which the effect coming from the transformation of coordinates
and normal vector can be seen very clearly. The reconstruction formulae
are given as follows: For any $\mathbf{a}=(a_{1},a_{2},a_{3})\in\mathbb{C}^{3}$,
let $y=F(x)$ be the map given above, then for any Dirichlet boundary
data $\phi^{N}=\phi^{N}(F(x)\big|_{\partial\Omega})$, where $\phi^{N}(y')=\eta^{N}(y')\exp(\sqrt{-1}Ny'\cdot\omega')\mathbf{a}$,
we have the following approximate solution 
\[
\Phi^{N}(y)=e^{\sqrt{-1}Ny'\cdot\omega'}e^{-Ny_{3}}\left\{ \eta^{N}(y')\mathbf{a}+\sum_{n=1}^{\frac{m}{\rho}}N^{-n\rho}v_{n}(N^{\alpha}y)\right\} ,
\]
with 
\[
\Phi^{N}(F(x))|_{\partial\Omega}=\phi^{N}
\]

1. When $\partial\Omega\in C^{1}$ and $\widetilde{C}_{ijkl}$ is
continuous at $x_{0}\in\partial\Omega$, we have 
\begin{equation}
\lim_{N\to\infty}\left\langle \Lambda_{\widetilde{\mathcal{C}}}\phi^{N},\overline{\phi^{N}}\right\rangle =\sum_{i,j=1}^{3}Z_{ij}(x_{0})a_{i}\overline{a_{j}},\label{eq: zero oder approximation formula}
\end{equation}
where $(Z_{ij})$ is the rank 2 tensor appeared in Theorem \ref{Main Thm}.

2. When $\partial\Omega\in C^{3}$ and $\dot{C}_{ijkl}\in C^{1,p}$
near $x_{0}\in\partial\Omega$, we have 
\begin{align}
 & \lim_{N\to\infty}N\left\langle (\Lambda_{\dot{\mathcal{C}}}-\Lambda_{\dot{\mathcal{C}}^{1}})\phi^{N},\overline{\phi^{N}}\right\rangle \nonumber \\
= & \dfrac{1}{2}\sum_{i,q,k,p=1}^{3}\dfrac{\partial}{\partial y_{3}}\left.\left(\sum_{j,l=1}^{3}\dot{C}_{ijkl}(x)\dfrac{\partial y_{p}}{\partial x_{l}}\dfrac{\partial y_{q}}{\partial x_{j}}\right)\right|_{x=F^{-1}(0)}A_{kp}A_{iq}\nonumber \\
= & \dfrac{1}{4}\dfrac{\partial\lambda}{\partial y_{3}}(x_{0})\left(\sqrt{-1}\sum_{i=1}^{2}\omega_{i}a_{i}-a_{3}\right)^{2}\nonumber \\
 & +\dfrac{1}{2}\dfrac{\partial\mu}{\partial y_{3}}(x_{0})\left[\sum_{i,j=1}^{2}\left(\dfrac{a_{i}\omega_{j}+a_{j}\omega_{i}}{2}\right)^{2}+2\sum_{i=1}^{2}\left(\dfrac{\sqrt{-1}a_{3}\omega_{i}-a_{i}}{2}\right)^{2}+1\right]\nonumber \\
 & +\dfrac{1}{2}\sum_{\underset{0\leq\alpha,\beta,\gamma\leq1}{\alpha+\beta+\gamma=1}}\sum_{i,j,k,l,p,q=1}^{3}\left.\left(\left(\dfrac{\partial^{\alpha}}{\partial y_{3}^{\alpha}}\dot{C}_{ijkl}\right)\dfrac{\partial^{\beta}}{\partial y_{3}^{\beta}}\left(\dfrac{\partial y_{p}}{\partial x_{l}}\right)\dfrac{\partial^{\gamma}}{\partial y_{3}^{\gamma}}\left(\dfrac{\partial y_{q}}{\partial x_{j}}\right)\right)\right|_{x=x_{0}}A_{kp}A_{iq},\label{eq:first order approximate formula}
\end{align}
where the elastic tensor $\dot{\mathcal{C}}^{1}$ is given by 
\[
\dot{\mathcal{C}}^{1}=F^{*}(\widetilde{\mathcal{C}}^{1,0})\mbox{ with }\widetilde{\mathcal{C}}^{1,0}=\widetilde{\mathcal{C}}(y',0),
\]
for $y=(y',y_{3})$ near $0\in\partial F(\Omega)$ and $A_{kp}$ is
a constant rank 2 tensor defined by 
\begin{equation}
A_{kp}=\begin{cases}
\sqrt{-1}\omega_{p}a_{k}, & \mbox{ for }k=1,2,3\mbox{ and }p=1,2,\\
-\omega_{3}a_{k}, & \mbox{ for }k=1,2,3\mbox{ and }p=3.
\end{cases}\label{eq:A_kp}
\end{equation}
We remark here that the boundary determination formulae for the Lame
moduli and their normal derivatives are given in terms of the leading
part of the equations of system. This is really an advantage of scaling
\eqref{eq:scaling} we introduced before.

Since \eqref{eq: zero oder approximation formula} easily follows
by taking into account on the arguments given before the previous
paragraph of this section and $J(x_{0})=I$, we will focus on \eqref{eq:first order approximate formula}.
This formula can be derived by using the integration by parts and
the representation formula of (\ref{eq:transformed elastic tensor}).
By using the same argument given in Section 3, we know that the limit
with respect to $N$ as $N\rightarrow\infty$ of the difference of
DN maps only depends on the highest order term with respect to $N$,
which means we have the following relation 
\begin{equation}
\lim_{N\to\infty}N\left\langle (\Lambda_{\widetilde{\mathcal{C}}}-\Lambda_{\widetilde{\mathcal{C}}^{1,0}})\phi^{N},\overline{\phi^{N}}\right\rangle =\lim_{N\to\infty}N\int_{\Omega}(\mathcal{C}-\dot{\mathcal{C}}^{1})\nabla\Phi^{N}:\nabla\Psi^{N}dx,\label{eq:non-flat reconstruction}
\end{equation}
where $\Phi^{N}(F(x))$, $\Psi^{N}(F(x))$ are approximate solutions
of the differential operators $\nabla\cdot(\dot{\mathcal{C}}\nabla)$
and $\nabla\cdot(\dot{\mathcal{C}}^{1}\nabla)$ with the same boundary
data $\Phi^{N}(F(x))|_{\partial\Omega}=\Psi^{N}(F(x))|_{\partial\Omega}=\phi^{N}$,
respectively.

We will further compute the right hand side of (\ref{eq:non-flat reconstruction})
to obtain \eqref{eq:first order approximate formula}. By the change
of variable $y=F(x)$ and the chain rule, we have 
\begin{align}
 & \int_{\Omega}\sum_{i,j,k,l=1}^{3}\dot{C}_{ijkl}(x)\dfrac{\partial\Phi_{k}^{N}}{\partial x_{l}}\dfrac{\partial\Psi_{i}^{N}}{\partial x_{j}}dx\nonumber \\
= & \int_{F(\Omega)}\sum_{i,p,k,p=1}^{3}\sum_{j,l=1}^{3}\left.\left(\dot{C}_{ijkl}(x)\dfrac{\partial y_{p}}{\partial x_{l}}\dfrac{\partial y_{q}}{\partial x_{j}}\right)\right|_{x=F^{-1}(y)}\dfrac{\partial\Phi_{k}^{N}}{\partial y_{p}}\dfrac{\partial\Psi_{i}^{N}}{\partial y_{q}}dy\nonumber \\
= & \int_{F(\Omega)}\sum_{i,p,k,p=1}^{3}\widetilde{C}_{ipkq}(y)\dfrac{\partial\Phi_{k}^{N}}{\partial y_{p}}\dfrac{\partial\Psi_{i}^{N}}{\partial y_{q}}dy,\label{eq:1 change}
\end{align}
and similarly 
\begin{equation}
\int_{\Omega}\sum_{i,j,k,l=1}^{3}\dot{C}_{ijkl}^{1}(x)\dfrac{\partial\Phi_{k}^{N}}{\partial x_{l}}\dfrac{\partial\Psi_{i}^{N}}{\partial x_{j}}dx=\int_{F(\Omega)}\sum_{i,q,k,p=1}^{3}\widetilde{C}_{ipkp}(y',0)\dfrac{\partial\Phi_{k}^{N}}{\partial y_{p}}\dfrac{\partial\Psi_{i}^{N}}{\partial y_{q}}dy,\label{eq:2 change}
\end{equation}
where $\widetilde{x}$ is the point such that $F(\widetilde{x})=(y',0)$.
Then it is easy to see 
\begin{align}
 & \int_{F(\Omega)}(\widetilde{\mathcal{C}}(y)-\widetilde{\mathcal{C}}^{1,0}(y',0))\nabla_{y}\Phi^{N}:\nabla_{y}\Psi^{N}dy\nonumber \\
= & \int_{F(\Omega)}\sum_{i,q,k,p=1}^{3}\widetilde{C}_{ipkp}(y)\dfrac{\partial\Phi_{k}^{N}}{\partial y_{p}}\dfrac{\partial\Psi_{i}^{N}}{\partial y_{q}}dy\nonumber \\
 & -\int_{F(\Omega)}\sum_{i,q,k,p=1}^{3}\widetilde{C}_{ipkp}(y',0)\dfrac{\partial\Phi_{k}^{N}}{\partial y_{p}}\dfrac{\partial\Psi_{i}^{N}}{\partial y_{q}}dy\label{eq:difference of DN in non-flat}
\end{align}
From direct calculation for the approximate solutions, we have 
\begin{equation}
\nabla_{y}\Phi_{j}^{N}=N\left(\begin{array}{c}
\sqrt{-1}\omega'\\
-1
\end{array}\right)e^{\sqrt{-1}Ny'\cdot\omega'}e^{-Ny_{3}}\eta^{N}(y')a_{j}+O(N^{1-\rho})\label{eq:first order appro}
\end{equation}
and 
\begin{equation}
\nabla_{y}\Phi_{j}^{N}=N\left(\begin{array}{c}
\sqrt{-1}\omega'\\
-1
\end{array}\right)e^{\sqrt{-1}Ny'\cdot\omega'}e^{-Ny_{3}}\eta^{N}(y')a_{j}+O(N^{1-\rho}).\label{eq:first order approxi 2}
\end{equation}
Substitute (\ref{eq:2 change}) and (\ref{eq:2 change}) into (\ref{eq:difference of DN in non-flat}),
by using similar arguments as in Section 3, then we have 
\begin{align}
 & \int_{\Omega}(\dot{\mathcal{C}}(x)-\dot{\mathcal{C}}^{1}(x))\nabla_{y}\Phi^{N}:\nabla_{y}\Psi^{N}dx\nonumber \\
= & \int_{F(\Omega)}(\widetilde{\mathcal{C}}(y)-\widetilde{\mathcal{C}}(y',0))\nabla_{y}\Phi^{N}:\nabla_{y}\Psi^{N}dy\nonumber \\
= & \sum_{i,q,k,p=1}^{3}\int_{F(\Omega)}(\widetilde{C}_{iqkp}-\widetilde{C}_{iqkp}(y',0))\dfrac{\partial\Phi_{k}^{N}}{\partial y_{p}}\dfrac{\partial\Psi_{i}^{N}}{\partial y_{q}}dy.\label{eq:int}
\end{align}
Now, we substitute (\ref{eq:first order appro}) and (\ref{eq:first order approxi 2})
into (\ref{eq:int}), and use (\ref{eq:Main Limit}) again, then we
have 
\begin{align}
 & \lim_{N\to\infty}\int_{\Omega}(\dot{\mathcal{C}}(x)-\dot{\mathcal{C}}^{1}(x))\nabla_{y}\Phi^{N}:\nabla_{y}\Psi^{N}dx\nonumber \\
= & \dfrac{1}{2}\sum_{i,q,k,p=1}^{3}\left(\dfrac{\partial}{\partial y_{3}}\widetilde{C}_{iqkp}\right)(0)A_{kp}A_{iq}.\label{eq:explic}
\end{align}
Note that the quantity $A_{kp}$ is obtained from the representation
of the approximate solution and straightforward calculation.

It remains to give the explicit formula for (\ref{eq:explic}) in
terms of the elastic tensor $\dot{\mathcal{C}}$. By the straightforward
calculation for (\ref{eq:explic}), it is not hard to see that 
\begin{align*}
 & \dfrac{1}{2}\sum_{i,q,k,p=1}^{3}\left(\dfrac{\partial}{\partial y_{3}}\widetilde{C}_{iqkp}\right)(0)A_{kp}A_{iq}\\
= & \dfrac{1}{2}\sum_{i,q,k,p=1}^{3}\dfrac{\partial}{\partial y_{3}}\left.\left(\sum_{j,l=1}^{3}\dot{C}_{ijkl}(x)\dfrac{\partial y_{p}}{\partial x_{l}}\dfrac{\partial y_{q}}{\partial x_{j}}\right)\right|_{x=F^{-1}(0)}A_{kp}A_{iq}\\
= & \dfrac{1}{4}\dfrac{\partial\lambda}{\partial y_{3}}(x_{0})\left(\sqrt{-1}\sum_{i=1}^{2}\omega_{i}a_{i}-a_{3}\right)^{2}\\
 & +\dfrac{1}{2}\dfrac{\partial\mu}{\partial y_{3}}(x_{0})\left[\sum_{i,j=1}^{2}\left(\dfrac{a_{i}\omega_{j}+a_{j}\omega_{i}}{2}\right)^{2}+2\sum_{i=1}^{2}\left(\dfrac{\sqrt{-1}a_{3}\omega_{i}-a_{i}}{2}\right)^{2}+1\right]\\
 & +\dfrac{1}{2}\sum_{\underset{0\leq\alpha,\beta,\gamma\leq1}{\alpha+\beta+\gamma=1}}\sum_{i,j,k,l,p,q=1}^{3}\left.\left(\left(\dfrac{\partial^{\alpha}}{\partial y_{3}^{\alpha}}\dot{C}_{ijkl}\right)\dfrac{\partial^{\beta}}{\partial y_{3}^{\beta}}\left(\dfrac{\partial y_{p}}{\partial x_{l}}\right)\dfrac{\partial^{\gamma}}{\partial y_{3}^{\gamma}}\left(\dfrac{\partial y_{q}}{\partial x_{j}}\right)\right)\right|_{x=x_{0}}A_{kp}A_{iq}
\end{align*}
which proves the reconstruction formula to identify the first order
derivatives of the Lamé moduli at the boundary for the non-flat boundary
case.

 \bibliographystyle{plain}
\bibliography{ref}

\end{document}